\newcommand{\erf}{\text{erf}}
\newcommand{\F}{\mathcal{F}}
\newcommand{\Ec}{\mathcal{E}}
\newcommand{\e}{\epsilon}
\newcommand{\test}{\varphi}
\renewcommand{\div}{{\rm div}}
\newcommand{\E}{\mathbb{E}}
\renewcommand{\P}{\mathcal{P}}
\newcommand{\C}{\mathcal{C}}
\newcommand{\rr}{\mathbb{R}}
\newcommand{\nn}{\mathbb{N}}
\renewcommand{\rho}{\varrho}
\newcommand{\X}{\bar{X}}
\newcommand{\tol}{\text{tol}}
\newcommand{\ldb}{\mathopen{\ooalign{\makebox[.4em][l]{$\lbrack$}\cr\makebox[.4em][r]{$\lbrack$}\cr}}}                                                       
\newcommand{\rdb}{\mathclose{\ooalign{\makebox[.4em][l]{$\rbrack$}\cr\makebox[.4em][r]{$\rbrack$}\cr}}}
\providecommand{\norm}[1]{\lVert#1\rVert}
\newtheorem{proposition}{Proposition}[section]
\newtheorem{lemma}{Lemma}[section]
\theoremstyle{definition}
\newtheorem{definition}{Definition}[section]
\newtheorem{remark}{Remark}[section]
\title{A consensus-based model for global optimization and its mean-field limit}
\author[Pinnau]{Ren\'e Pinnau}
\address[Ren\'e Pinnau]{\newline Department of Mathematics, Technische Universit\"at Kaiserslautern, 
	\newline Erwin-Schr\"odinger-Strasse, 67663 Kaiserslautern, Germany}
\email{pinnau@mathematik.uni-kl.de}
\author[Totzeck]{Claudia Totzeck}
\address[Claudia Totzeck]{\newline Department of Mathematics, Technische Universit\"at Kaiserslautern, 
	\newline Erwin-Schr\"odinger-Strasse, 67663 Kaiserslautern, Germany}
\email{totzeck@mathematik.uni-kl.de}
\author[Tse]{Oliver Tse}
\address[Oliver Tse]{\newline Department of Mathematics and Computer Science, Eindhoven University of Technology, 
	\newline P.O. Box 513, 5600MB Eindhoven, The Netherlands}
\email{o.t.c.tse@tue.nl}
\author[Martin]{Stephan Martin}
\address[Stephan Martin]{\newline SAP Walldorf, Germany}
\email{smartin.research@gmail.com}
\begin{document}

\maketitle

\begin{abstract}
We introduce a novel first-order stochastic swarm intelligence (SI) model in the spirit of consensus formation models, namely a consensus-based optimization (CBO) algorithm, which may be used for the global optimization of a function in multiple dimensions. The CBO algorithm allows for passage to the mean-field limit, which results in a nonstandard, nonlocal, degenerate parabolic partial differential equation (PDE). Exploiting tools from PDE analysis we provide convergence results that help to understand the asymptotic behavior of the SI model. We further present numerical investigations underlining the feasibility of our approach.\\
\end{abstract}

\keywords{\small Keywords: Consensus formation; global optimization; interacting system; mean-field limit; stochastic differential equations.\\}

\subjclass{\small AMS Subject Classification: 34F05, 35B40, 90C26}

\section{Introduction}
Many applied problems rely on the numerical optimization of a target function. While the problem of local minimization is well understood, the numerical identification of global minima is still a challenging task \cite{LoSch13}.
Over the last decades, {\em metaheuristics} have played an increasing role in the design of fast algorithms to provide sufficiently good solutions to an optimization problem. {\em Swarm intelligence} (SI), for one, is a class of metaheuristic algorithms that is mostly inspired by nature, especially biological systems \cite{parsopoulos2002recent,poli2007particle}. SI systems consists typically of a population of simple agents interacting with one another and with their environment. These interactions often lead to the emergence of an 'intelligent' collective behavior, unknown to the individual agents. The main advantage of these methods over other global minimization strategies such as {\em simulated annealing} is their resilience to the problem of saturating at a local minima.

Notable algorithms within this class include {\em particle swarm optimization} (PSO) (see, e.g. \cite{kennedy2010particle,poli2007particle} and the references therein), {\em ant colony optimization} {mohan2012survey} (ACO) and {\em artificial bee colony} optimization \cite{karaboga2014comprehensive} (ABC). Their basic set-up is a population of agents exploring a bounded domain and evaluating the function along each trajectory. The force driving the agents' motion is derived from a mixture of individual steering and communication with the collective. For instance, agents can store the position of the best function value along their individual path on the one side, and get information on the global best location by communicating with the others. The general difficulty lies in the development of robust algorithms with a well-balanced exploration and exploitation ability. A vast number of these algorithms have been suggested in the literature and the variants differ with respect to memory effects, stochasticity, time discretization and other features.

On the other hand, individual-based models are also frequently used to investigate collective behavior effects in applications such as mathematical biology \cite{naldi2010mathematical}, swarming \cite{CarrilloCS,cucker1,vicsek1995novel}, crowd dynamics \cite{bellomo2011modeling,bellomo2016behavioral,bellomo2012modeling} or opinion formation \cite{galam2012sociophysics,helbing2010quantitative,motschtadmor}. Ref.~\cite{naldi2010mathematical}, for example, provides a series of articles that demonstrates the common methodological approaches and tools for modeling and simulating collective behavior. As for opinion dynamics, Ref.~\cite{helbing2010quantitative} provides a unified and comprehensive overview of the different stochastic methods, their interrelations and properties. In particular, psychologists have begun investigating the proponents of change in attitude and opinion using mathematical models as early as the 1950's \cite{festinger1962theory,french1956formal,osgood1955principle}. Opinion formation is one of the fundamental aspects of self-organization in networks of systems \cite{dolfin2015modeling,jia2015opinion,knopoff2014mathematical,pareschi2013interacting}.

In general, opinion dynamics within an interacting population can lead to either consensus, polarisation or even fragmentation. A thorough understanding of such phenomena would, initially, require the formulation of mathematical models, which describe the evolution of opinions in the population under investigation. An early formulation of such a model was given by French in \cite{french1956formal} to understand complex phenomena found empirically about groups. A well recognised linear model that leads to consensus is the de Groot's model \cite{degroot1974reaching} that was analysed in the work \cite{chatterjee1977towards}. Later on, Vicsek et.~al.~introduced a simple local averaging rule as a basis for studying the cooperative behaviour of animals \cite{vicsek1995novel}. A simplification of the Vicsek model that exhibits interesting clustering and consensus effects is found in \cite{hegselmann2002opinion,krause2000discrete}, which makes it more attractive for engineering implementations. Other notable articles on opinion dynamics include \cite{acemoglu2011opinion,galam1991towards}.

Unfortunately, a through investigation of individual-based models become exceedingly difficult, and sometimes even impossible, for very large numbers of individuals. A very popular workaround is to employ techniques from kinetic theory to analyze such models by the means of partial differential equations (PDE), which describe the evolution of the probability distribution of agents, e.g., by passing to mean field equations \cite{CarrilloCanizoBolley,CarrilloCS,dobrushin1989dynamical,HaTadmor,Sznitman} or by means of evolutionary differential games \cite{marsan2016stochastic,aletti2007first,bellomo2016mathematics,boudin2009kinetic}. Stationary states, ergodicity and pattern formations may then be 'easily' investigated on the continuous PDE level rather than on the discrete particle system. 

In this work we introduce a stochastic SI algorithm called {\em consensus-based optimization} (CBO) that bears strong resemblance to consensus formation models. In such models, agents adapt their opinion, revise their beliefs, or change their behavior as a result of social interactions with other agents, thereby leading to either consensus, polarization or fragmentation within an interacting population \cite{burini2016collective,krause2000discrete,naldi2010mathematical}. A metaheuristic based on a discrete consensus model may be found in \cite{Sichani13}.

The objective of this paper is twofold: 
\begin{enumerate}
\item[(i)] Introduce an agent-based global optimization strategy that is efficient and robust and that further allows passage to the mean-field limit.

\item[(ii)] Partially justify the efficiency of the method by analyzing its mean-field limit equations.
\end{enumerate}

The study of SI algorithms from a mean-field point of view has, to the best of our knowledge, not been fully explored before. The mean-field perspective will allow to get a deeper understanding of the performance of the agent-based algorithm, especially in regards to convergence properties.

This work is organized as follows: We begin Section~\ref{sec:model} with the formulation of the CBO algorithm. Here, we also derive the mean-field limit equation and draw conclusions on the analytic properties of the algorithm. In Section~\ref{sec:numerics}, we present and justify results on the numerical performance of the algorithm in dimension $d=1$. Additionally, we present numerical results indicating the potential of the CBO algorithm in multiple dimensions in Section~\ref{sec:multi}. In particular, we consider optimizations problems in dimension $d=20$. Finally, concluding remarks are given in Section~\ref{sec:conclusions}. 

\section{A Consensus-Based Optimization (CBO) Algorithm}
\label{sec:model}
In this section we introduce a first-order consensus-based minimization algorithm with smooth deterministic and multiplicative stochastic forcing. The task at hand is to find a global minimum 
\begin{equation}\label{eq:minprob}
 \min\nolimits_{x\in\rr^d} f(x)
\end{equation}
of a given continuous objective function $f\in\C_b(\rr^d;\rr)$ which is assumed to be non-negative and bounded. Here, the number of degrees of freedom for the optimization is given by the spatial dimension $d \in \nn$.

To find this minimum we consider a system of $N\in\nn$ interacting agents with position vector $X_t^i\in\rr^d$, $i = 1, \ldots, N$, which evolves in time with respect to the stochastic differential equations (SDEs) given by
\begin{subequations}\label{eq:particle}
\begin{align}
 dX_t^i = -\lambda (X_t^i-v_f)\,H^\e(f(X_t^i)-f(v_f))\,d t + \sqrt{2} \sigma |X^i_t-v_f| d W_t^i,  \label{model-dynamics}
\end{align}
with drift parameter $\lambda>0$ and noise parameter $\sigma\ge 0$. The point $v_f$ is calculated from the mean
\begin{align}
 v_f  =  \frac{1}{\sum_i \omega_f^\alpha(X^i_t)} \sum\nolimits_i X_t^i\, \omega_f^\alpha(X^i_t). \label{model-consensus}
\end{align}
\end{subequations}
The system is supplemented with an initial condition $X_t^i(0) = X_0^i \in \rr^d$, $i = 1, \ldots, N$, and the
weight function $\omega_f^\alpha$ is given as a power of the reciprocal function value. In this paper, we consider the particular form 
\begin{equation}
\omega_f^\alpha(x) =\exp(-\alpha f(x)), \quad \alpha >0. \label{model-weight}
\end{equation}
Here, $H^\e\colon\rr\to\rr$ denotes a smooth regularization of the Heaviside function.

\begin{remark}
Compared to existing particle optimization algorithms (see e.g. \cite{Sichani13,eberhart1995new,kennedy2010particle,parsopoulos2002recent,poli2007particle}) we avoid the evaluation of $\arg\min_{i=1,\ldots,N} f(X_t^i)$ by considering \eqref{model-consensus} and \eqref{model-weight} instead. Additionally, we incorporate a multiplicative noise term in \eqref{model-dynamics}.
\end{remark}

The positional change of an agent is given by two components: First, each agent compares the function value at its own location $f(X^i_t)$ with the function value at the weighted average location of the collective $f(v_f)$. It is driven towards $v_f$ if $f(X_t^i)-f(v_f)$ is positive, which is determined by $H^\e$. The magnitude of the attraction towards $v_f$ is given by the distance $\lambda|X^i_t-v_f|$, $\lambda>0$, hence agents far away from $v_f$ are strongly attracted, whereas agents located at $v_f$ tend to keep their position.

The second component is a random search term, which we model as independent Brownian motions $\sqrt{2} \sigma W^i_t$ with uniform diffusion parameter $\sigma>0$, whose individual variances are scaled with the distance of the agent from $v_f$. This implies that agents away from $v_f$ exhibit a large noise in their search path, enabling them to explore their current area, while agents near $v_f$ display low or no randomness emphasizing their current position. The current weighted average $v_f$ is determined by a weighted linear interaction process of the collective, where the weight $\omega_f^\alpha$ is a function of each agent's location and the objective function $f$.

\begin{remark}
The model \eqref{eq:particle} neglects the inertia of particles as well as memory effects \cite{kennedy2010particle}. We emphasize that both components of the agent's behavior, attraction towards the average location and random walks at the current location, are scaled with the distance towards $v_f$. Despite its simplicity, we will argue that the algorithm performs surprisingly well and allows for an analytical treatment as the system possesses a mean-field limit towards a partial differential equation (cf.~\cite{CarrilloChoiTotzeckTse}). 
\end{remark}

\subsection{The role of the weighted average $v_f$ and weight function $\omega_f^\alpha$} In most swarm intelligence models, such as PSO, $v_f$ is typically given by the current global best, i.e., 
\[
 v_f=\arg\min\nolimits_{i=1,\ldots,N} f(X_t^i).
\]
However, the existence of a distinguished agent does not permit the passage to the mean-field limit. This motivated the use of a weighted average as given in \eqref{model-consensus}. 

By assuming independence of the processes $X_t^i$, $i=1,\ldots,N$, we may formally pass to the limit $N\to\infty$ in \eqref{model-consensus} to obtain
\begin{align}\label{eq:v_mean-field}
 \frac{1}{\sum_i \omega_f^\alpha(X^i_t)} \sum\nolimits_i X_t^i\, \omega_f^\alpha(X^i_t) \quad\longrightarrow \quad \frac{1}{\int_{\rr^d} \omega_f^\alpha\,d\rho_t} \int_{\rr^d} x\,\omega_f^\alpha\,d\rho_t,
\end{align}
in the distributional sense, which holds due to the law of large numbers. Here $\rho_t\in\P(\rr^d)$ is a Borel probability measure describing the one-particle mean-field distribution (cf.~Section~\ref{sec:mean-field}), which is assumed to be absolutely continuous w.r.t.~the Lebesgue measure $dx$. In this case, $\omega_f^\alpha \rho_t$ satisfies
\begin{align}\label{eq:laplace}
 \lim\nolimits_{\alpha\to\infty}\left(-\frac{1}{\alpha}\log\left(\int_{\rr^d} e^{-\alpha f} d\rho_t\right)\right) = \inf f,
\end{align}
by the {\em Laplace principle} \cite{dembo2009large}. Therefore, if $f$ attains a single minima $x_*\in\text{supp}(\rho_t)$, then the Gibbs-type measure $\eta_t^\alpha:=\omega_f^\alpha \rho_t/\|\omega_f^\alpha\|_{L^1(\rho_t)}\in\P(\rr^d)$ approximates a Dirac distribution $\delta_{x_*}$ at $x_*\in\rr^d$ for sufficiently large $\alpha\gg 1$. In this case, the value on the right-hand side of \eqref{eq:v_mean-field} provides a good estimate of $x_*$. These kind of weighted measures also appear in other metaheuristics, such as simulated annealing \cite{SimAnnCerny,SimAnnKirk}.

For completeness, we include the proof of the convergence mentioned in \eqref{eq:laplace} for sufficiently regular functions $f$. In the rest of this paper, we denote $\P_p^{ac}(\rr^d)$ as the space of Borel probability measures with finite $p$-th moment, which are further absolutely continuous w.r.t.~the Lebesgue measure $dx$ on $\rr^d$.

\begin{proposition}\label{prop:laplace}
 Assume that $f\in\C_b(\rr^d;\rr)$, $f\ge 0$, attains a unique global minimum at the point $x_*\in\rr^d$ and let $\rho\in\P^{ac}(\rr^d)$. Then, we have 
 \begin{align}\label{eq:laplace2}
 \lim\nolimits_{\alpha\to\infty}\left(-\frac{1}{\alpha}\log\left(\int_{\rr^d} e^{-\alpha f} d\rho\right)\right) = f(x_*).
\end{align}
 
\end{proposition}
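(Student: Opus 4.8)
The plan is to prove the claimed identity by squeezing the limit between two matching bounds. Write $f_* := f(x_*) = \inf_{\rr^d} f$ and abbreviate $I(\alpha) := -\tfrac1\alpha\log\lb\int_{\rr^d} e^{-\alpha f}\,d\rho\rb$. Since $f\ge 0$ and $\rho$ is a probability measure, the integrand satisfies $0 < e^{-\alpha f}\le 1$, so $\int_{\rr^d} e^{-\alpha f}\,d\rho\in(0,1]$ and $I(\alpha)\ge 0$ is well-defined and finite for every $\alpha>0$. The identity \eqref{eq:laplace2} then follows once I show $\liminf_{\alpha\to\infty} I(\alpha)\ge f_*$ and $\limsup_{\alpha\to\infty} I(\alpha)\le f_*$.

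First I would establish the lower bound using only the global minimality $f(x)\ge f_*$, which gives $e^{-\alpha f(x)}\le e^{-\alpha f_*}$ pointwise. Integrating against $\rho$ and using $\rho(\rr^d)=1$ yields $\int_{\rr^d} e^{-\alpha f}\,d\rho\le e^{-\alpha f_*}$. Applying the map $t\mapsto-\tfrac1\alpha\log t$, which is decreasing for each fixed $\alpha>0$, produces $I(\alpha)\ge f_*$ for every $\alpha>0$, and hence $\liminf_{\alpha\to\infty} I(\alpha)\ge f_*$.

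For the upper bound I would localize near the minimizer. Fix $\delta>0$; by continuity of $f$ at $x_*$ there is $r>0$ with $f(x)\le f_*+\delta$ on the ball $B_r(x_*)$. Restricting the domain of integration and bounding the integrand from below gives $\int_{\rr^d} e^{-\alpha f}\,d\rho\ge\int_{B_r(x_*)} e^{-\alpha f}\,d\rho\ge e^{-\alpha(f_*+\delta)}\,\rho(B_r(x_*))$. Taking $-\tfrac1\alpha\log(\cdot)$ then yields $I(\alpha)\le f_*+\delta-\tfrac1\alpha\log\rho(B_r(x_*))$, and letting $\alpha\to\infty$ the final term tends to $0$, so $\limsup_{\alpha\to\infty} I(\alpha)\le f_*+\delta$. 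Since $\delta>0$ is arbitrary, $\limsup_{\alpha\to\infty} I(\alpha)\le f_*$, and combining with the lower bound gives the limit $f_*$.

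The one genuinely delicate point, and the step I expect to be the main obstacle, is guaranteeing $\rho(B_r(x_*))>0$, so that $\tfrac1\alpha\log\rho(B_r(x_*))\to 0$ rather than diverging to $-\infty$. This is exactly the condition that $x_*$ lie in the support of $\rho$, as highlighted in the discussion preceding the proposition where $x_*\in\text{supp}(\rho_t)$ is assumed. Mere absolute continuity of $\rho$ with respect to Lebesgue measure does not force $x_*\in\text{supp}(\rho)$, so I would either carry $x_*\in\text{supp}(\rho)$ as a standing hypothesis or restrict to densities that are strictly positive near $x_*$; under either assumption $\rho(B_r(x_*))>0$ for every $r>0$ and the argument closes.
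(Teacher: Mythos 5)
Your proof is correct, but it takes a genuinely different route from the paper's. You use the classical two-sided squeeze: the trivial bound $\int_{\rr^d} e^{-\alpha f}\,d\rho \le e^{-\alpha f_*}$ for the lower estimate, and localization on a small ball $B_r(x_*)$ where $f\le f_*+\delta$ for the upper estimate, which requires $\rho(B_r(x_*))>0$. The paper instead studies the Gibbs averages $\Ec_\alpha(f)=\int_{\rr^d} f\,d\eta^\alpha$ with $\eta^\alpha = e^{-\alpha f}\rho/\int_{\rr^d} e^{-\alpha f}d\rho$: it differentiates in $\alpha$ to obtain the dissipation identity $\frac{d}{d\alpha}\Ec_\alpha(f-f_*) = -\frac12\iint_{\rr^d\times\rr^d}|f(x)-f(y)|^2\,d\eta^\alpha(x)\,d\eta^\alpha(y)<0$, argues that $\Ec_\alpha(f)$ decays to $f_*$, uses Chebyshev's inequality to conclude $\eta^\alpha\to\delta_{x_*}$ in distribution, and then (in the subsequent remark) recovers the stated limit from the Ces\`aro-type identity $-\frac1\alpha\log\int_{\rr^d} e^{-\alpha f}d\rho = \frac1\alpha\int_0^\alpha \Ec_\beta(f)\,d\beta$. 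The paper's route buys more: the concentration of the weighted measures $\eta^\alpha\to\delta_{x_*}$, which is exactly what the algorithm exploits and is reused later in the paper. Your route is more elementary and, arguably, more watertight for the stated limit: the paper's step asserting that $\Ec_\alpha(f)$ decays \emph{to} $f_*$ (rather than to some limit $L\ge f_*$) does not follow from monotonicity and the lower bound alone, and tacitly relies on the same localization/support reasoning that you make explicit. Your closing observation is also well taken: the hypothesis $\rho\in\P^{ac}(\rr^d)$ by itself does not force $x_*\in\text{supp}(\rho)$, and the statement is false without that condition (if $\rho$ charges no neighborhood of $x_*$, the limit is the $\rho$-essential infimum of $f$, which can exceed $f(x_*)$); the paper's proof silently inserts this by opening with ``let the global minimum be attained at $x_*\in\text{supp}(\rho)$,'' so carrying it as an explicit standing assumption, as you propose, is the correct fix.
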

\begin{proof}
  Let the global minimum be attained at $x_*\in\text{supp}(\rho)$ with $f_*:=f(x_*)$. Notice that $\eta^\alpha=\omega_f^\alpha \rho/\|\omega_f^\alpha\|_{L^1(\rho)}\in\P^{ac}(\rr^d)$. We begin by showing that the functional 
  \[
   \Ec_\alpha(f):=\int_{\rr^d} f\,d\eta^\alpha,
  \]
  converges towards $f_*$ for $\alpha\to\infty$. For this reason, we consider the derivative of $\Ec_\alpha(f-f_*)$ w.r.t.~$\alpha$. Simple calculations lead to
  \[
   \frac{d}{d\alpha} \Ec_\alpha(f-f_*) = -\frac{1}{2}\iint_{\rr^d\times\rr^d} |f(x)-f(y)|^2 d\eta^\alpha(x)\,d\eta^\alpha(y)<0,
  \]
  where the strict inequality holds due to the fact that $f$ is not globally constant. Since $\Ec_\alpha(f)\ge f_*$ for all $\alpha\ge 0$, we see from the differential inequality that $\Ec_\alpha(f)$ decays towards $f_*$ as $\alpha\to\infty$.
  
  Now, let $\e>0$ be arbitrary. From Chebyshev's inequality we further obtain
  \[
   \eta^\alpha(\{x\in\rr^d\,|\, f(x)-f_*\ge \e\}) \le \frac{1}{\e}\int_{\{f-f_*\ge \e\}} (f-f_*)\,d\eta^\alpha \le \frac{1}{\e}\Ec_\alpha(f-f_*).
  \]
  Since the right-hand side tends to zero for $\alpha\to\infty$, we deduce that the measure $\eta^\alpha$ converges in distribution towards the Dirac distribution $\delta_{x_*}$ at the global minima $x_*$, as asserted.
\end{proof}

\begin{remark}
 Notice that since
 \[
  \frac{d}{d\alpha} \log\left(\int_{\rr^d} e^{-\alpha f} d\rho \right) = -\Ec_\alpha(f),
 \]
 we may integrate over $\alpha\ge 0$ and formally pass to the limit $\alpha\to\infty$ to obtain
 \[
  \lim\nolimits_{\alpha\to\infty}\left(-\frac{1}{\alpha}\log\left(\int_{\rr^d} e^{-{\alpha} f} d\rho \right)\right) = \lim\nolimits_{\alpha\to\infty}\frac{1}{\alpha}\int_0^{\alpha} \Ec_{\beta}(f)\,d\beta = f_*,
 \]
 which is precisely the Laplace principle \eqref{eq:laplace}.
\end{remark}

\subsection{The deterministic case $\sigma=0$} 
The interacting particle system \eqref{eq:particle} may also be considered in the deterministic setting $\sigma=0$. According to \eqref{model-consensus}, the consensus location $v_f$ is a convex combination of the agent's positions. It is hence straightforward that all agents are attracted towards a point inside the convex hull of initial positions $\operatorname{conv}(X^1_0,\dots,X^N_0)$ for all times. The dynamics bear resemblance to opinion formation models, which are studied, e.g., in \cite{albi2015kinetic,krause2000discrete,naldi2010mathematical,motschtadmor}  (see also references therein). Further, there is a relation to consensus mechanism in the velocity components of Cucker-Smale type models (see e.g.~\cite{CarrilloCS,cucker1,cucker2,HaTadmor}). 

Rewriting \eqref{eq:particle} into the deterministic ODE system
\begin{align}
 \frac{d X^i}{d t} = \left(\frac{1}{\sum\nolimits_j \omega_f^\alpha(X_t^j)}\sum\nolimits_{j\ne i } (X_t^i-X_t^j)\,\omega_f^\alpha(X_t^j)\right)H^\e(f(X_t^i)-f(v_f)),
\end{align}
gives a representation of the dynamics as a pairwise weighted interaction, which is scaled with the nonlinear and nonlocal Heaviside term. Note, that the interactions are not symmetric and the uniform connectivity of agents can vanish with $\e\rightarrow0$.   

In all settings, one is interested in the formation of consensus patterns.

\begin{definition}
 A stationary state ${\bf X}=(X^1,\ldots,X^N)$ of \eqref{eq:particle} with $X^1=\cdots=X^N$ is called \emph{uniform consensus}. If particles aggregate to a stationary state consisting of several spatially separated concentration points, this is termed \emph{nonuniform consensus} (cf.~\cite{chatterjee1977towards,degroot1974reaching}). 
\end{definition}

In the global minimization problem however, we not only expect our algorithm to converge to a uniform consensus state, but the concentration point should be located \emph{at or near the global minima}. This is a significant difference to the aforementioned opinion or flocking models, where the location of the consensus state is usually not of primary concern. The deterministic approach to algorithm \eqref{eq:particle} fails at this additional requirement, due to the existence of, possibly uncountable, nonuniform consensus stationary states.

\begin{lemma}\label{lem:stationary} 
 Consider \eqref{eq:particle} with $\sigma=0$. Then any selection of positions ${\bf X}=(X^1,\ldots,X^N)$ taken from any level set of $f$ such that $f(X^1)=\dots=f(X^N)$ is a stationary state.  
\end{lemma}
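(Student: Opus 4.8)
The plan is to turn the claim into a statement about a single scalar factor. Setting $\sigma=0$ in \eqref{eq:particle} removes the Brownian term, so the evolution reduces to the pure drift
\[
 \dot X^i = -\lambda\,(X^i-v_f)\,H^\e\big(f(X^i)-f(v_f)\big),\qquad i=1,\dots,N,
\]
and a configuration ${\bf X}=(X^1,\dots,X^N)$ is stationary exactly when all $N$ right-hand sides vanish. Thus I must show that each of these vectors is zero under the sole hypothesis $f(X^1)=\dots=f(X^N)$, i.e. for an arbitrary selection from an arbitrary level set.

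First I would exploit the level-set hypothesis to simplify $v_f$. Writing $c$ for the common value $f(X^i)\equiv c$, the weights in \eqref{model-consensus} all coincide, $\omega_f^\alpha(X^i)=\exp(-\alpha c)$, so the normalising denominator cancels them and $v_f$ collapses to the plain arithmetic mean $\bar X:=\frac1N\sum_i X^i$. The decisive consequence is that the argument of $H^\e$ no longer depends on the index:
\[
 f(X^i)-f(v_f)=c-f(\bar X)=:s\qquad\text{for every }i.
\]
Hence the whole drift field factorises as $\dot X^i=-\lambda\,H^\e(s)\,(X^i-\bar X)$, a single common scalar $H^\e(s)$ multiplying the centering field $X^i-\bar X$. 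The entire configuration is therefore stationary the moment this common factor vanishes, $H^\e(s)=0$; the only alternative, $X^i=\bar X$ for all $i$, is the degenerate uniform-consensus case and requires no separate argument.

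It remains to establish that this common factor vanishes, $H^\e(s)=0$, and this is the step I expect to be the crux. The plan is to read off the vanishing from the defining shape of the smoothed Heaviside $H^\e$ together with the value of the common argument $s=c-f(\bar X)$, using that $H^\e$ is built to be zero on the region corresponding to non-positive differences. The genuine difficulty --- and the main obstacle --- is that the level-set hypothesis pins down the common function value $c=f(X^i)$ but gives no direct handle on $f$ evaluated at the mean $\bar X$, so the location of $s$ relative to the region where $H^\e$ vanishes is not immediate from the positions alone and must be extracted from the structure of $H^\e$ itself. Once $H^\e(s)=0$ is secured, all $N$ drift vectors vanish simultaneously and ${\bf X}$ is stationary, which is exactly the asserted conclusion.
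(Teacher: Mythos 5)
Your reduction is exactly the computation the paper leaves implicit (note that the paper states Lemma~\ref{lem:stationary} \emph{without} proof): equal values $f(X^i)=c$ make all weights $\omega_f^\alpha(X^i)=e^{-\alpha c}$ coincide, so $v_f$ collapses to the arithmetic mean $\bar X$, and each drift vector factorises as $-\lambda\, H^\e\big(c-f(\bar X)\big)\,(X^i-\bar X)$ with a single index-independent scalar factor. But the step you defer to the end --- showing $H^\e\big(c-f(\bar X)\big)=0$ --- is a genuine gap, and it cannot be closed in the generality of the statement. There are two separate obstructions. First, for the regularization the paper actually uses in its numerics, $H^\e(x)=\frac12\erf(x/\e)+\frac12$, one has $H^\e>0$ everywhere; contrary to your assumption that $H^\e$ ``is built to be zero'' on the nonpositive half-line, no nonuniform configuration is then \emph{exactly} stationary --- the drift is merely exponentially small when $c-f(\bar X)\ll -\e$. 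Second, even granting a one-sided regularization with $H^\e\equiv 0$ on $(-\infty,0]$, you would need $f(\bar X)\ge c$, and the level-set hypothesis does not imply this: place two points symmetrically at level $c$ around a strict local minimum (say $f(x)=x^2$ near the origin, $X^{1}=-a$, $X^{2}=a$, so $c=a^2$ while $f(\bar X)=f(0)=0<c$); then $H^\e\big(c-f(\bar X)\big)\approx 1$ and the configuration contracts toward $\bar X$, so it is not stationary. Your closing suspicion --- that the positions alone give no handle on $f(\bar X)$ --- is exactly right, and no amount of structure of $H^\e$ can rescue the step.

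What is true, and what the paper's surrounding discussion suggests is intended (in the double-well example of Fig.~\ref{fig:comparisonDetSto} the consensus level set straddles the central bump, so $f(v_f)>c$), is the conditional statement: if in addition the Heaviside factor vanishes at the configuration --- e.g. $f(v_f)\ge \max_i f(X^i)$ under the sharp convention $H(x)=0$ for $x\le 0$, or $c-f(\bar X)$ lying in a region where $H^\e$ vanishes identically --- then the entire drift field is zero and the level-set configuration is stationary. Your factorisation proves precisely this conditional version, which is all the paper needs (existence of many nonuniform consensus equilibria destroying convergence to the global minimizer). But the lemma as literally stated (``any selection from any level set'') requires that extra hypothesis, and your proposal as written does not, and could not, supply the missing step; an honest write-up should either add the hypothesis $f(v_f)\ge\max_i f(X^i)$ or restrict to a regularization $H^\e$ vanishing on $(-\infty,0]$ together with that geometric condition.
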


The inclusion of noise in \eqref{eq:particle} eliminates such unstable equilibria, since the formation of particle configurations that are nonuniform consensus has probability zero due to the Brownian motion. This also means that only uniform consensus is permitted in the stochastic model. In Fig.~\ref{fig:comparisonDetSto} we compare the deterministic case ($\sigma=0$) with a stochastic one ($\sigma=0.7$) for a double-well type function
\begin{align}\label{eq:double-well}
 f(x) = 0.2 x^4 - 2x^2 + 0.5x +10,
\end{align}
with global minimum positioned at $x_* = -2.29613$. One clearly observes that the particles form a nonuniform consensus on the level set $f(X^i) = 9.896$ in the deterministic case. In the presence of noise, the particles converge to a uniform consensus state near the global minimum.

\begin{figure}
\centering
\begin{subfigure}{.49\textwidth}
 \includegraphics[keepaspectratio=true,width=\textwidth]{./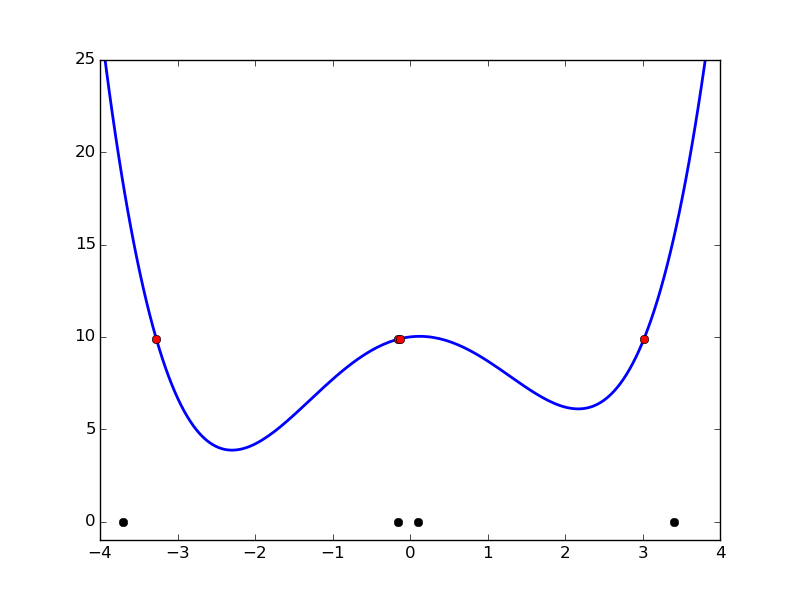}
\end{subfigure}
\begin{subfigure}{.49\textwidth}
 \includegraphics[keepaspectratio=true,width=\textwidth]{./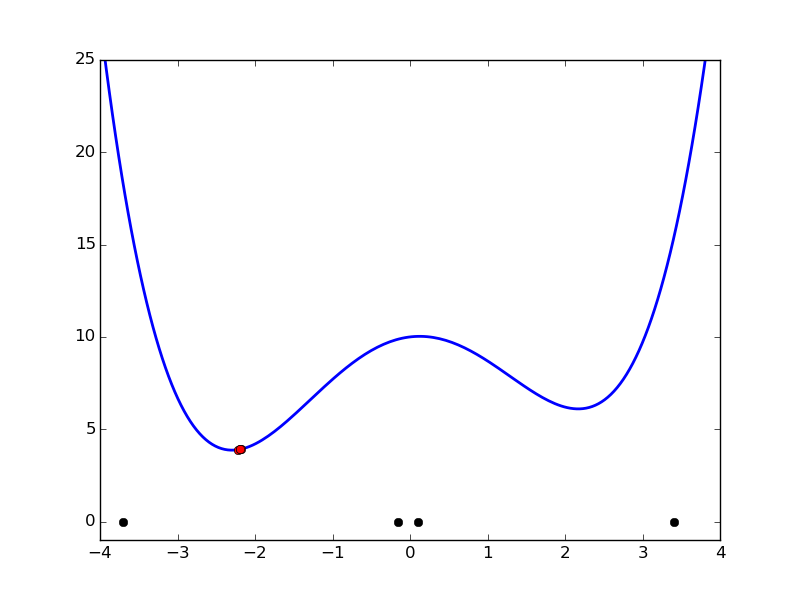}
\end{subfigure}
\caption{Double-well objective function given in \eqref{eq:double-well}. 
The black markers show the initial particle positions while the red markers denote the converged particles positions mapped onto the objective function. Left: deterministic scheme ($\sigma=0$). Right: stochastic scheme ($\sigma=0.7$).}\label{fig:comparisonDetSto}
\end{figure}

\subsection{The mean-field limit}\label{sec:mean-field}
Due to the smoothness of the right-hand side, we can formally derive the mean-field equation of the microscopic system \eqref{eq:particle}. Mean-field limits for interacting particle system with noise have been rigorously studied in, e.g.~\cite{CarrilloCanizoBolley,dobrushin1989dynamical,Sznitman}.

A standard strategy to formally derive the mean-field limit is to consider the dynamics of the first marginal of $\nu_t^N$ and make the so-called {\em propagation of chaos} assumption on the marginals. More specifically, we assume that $\nu_t^N \approx \rho_t^{\otimes N}$ for $N\gg 1$, i.e., the random variables $X_t^1,\ldots,X_t^N$ are approximately independently $\rho_t$-distributed. In this case,
\[
 \frac{1}{N}\sum\nolimits_j \omega_f(X_t^j) \approx \int_{\rr^d} \omega_f\,d\rho_t,\qquad \frac{1}{N}\sum\nolimits_j X_t^j\,\omega_f(X_t^j) \approx \int_{\rr^d} x\,\omega_f\,d\rho_t,
\]
simply due to the law of large numbers, and therefore $v_f \approx v_f[\rho_t]$. Consequently, \eqref{model-dynamics} becomes independent of $j\ne i$, and we obtain the so-called {\em Mc-Kean nonlinear process}
\begin{subequations}
\begin{align}
 d\X_t = -\lambda (\X_t-v_f[\rho_t])\,H^\e(f(\X_t)-f(v_f[\rho_t]))\,d t + \sqrt{2} \sigma |\X_t-v_f[\rho_t]| d W_t
 \end{align}
 where the weighted average is given by
 \begin{align}
 v_f[\rho_t ] = \frac{1}{\int_{\rr^d} \omega_f^\alpha d\rho_t} \int_{\rr^d} x\,\omega_f^\alpha d\rho_t,\qquad \rho_t = \text{law}(\X_t).
\end{align}
\end{subequations}
Equation (2.9a) may be equivalently expressed as the Fokker--Planck equation
\begin{gather}\label{eq:model-pde}
 \partial_t \rho_t = \Delta (\kappa[\rho_t]\rho_t) + \text{div}(\mu[\rho_t]\rho_t), \\
 \intertext{with}
 \kappa[\rho_t](x) = \sigma^2|x-v_f[\rho_t]|^2,\qquad \mu[\rho_t](x) = -\lambda(x-v_f[\rho_t])H^\e(f(x)-f(v_f[\rho_t])),\nonumber
\end{gather}
which describes the evolution of the law corresponding to the Mc-Kean nonlinear process $\{\X_t\in\rr^d\,|\,t\ge 0\}$. 

\begin{remark}
We note that the presence of $v_f$ makes the Fokker--Planck equation nonlinear and nonlocal in both the convection and diffusion part. This is nonstandard in the literature and raises several analytical and numerical questions \cite{CarrilloChoiTotzeckTse}.
\end{remark}

The mean-field limit equation \eqref{eq:model-pde} allows for an analytical discussion of the dynamics, which is advantageous compared to the large-scale stochastic system \eqref{eq:particle}. The formation of a consensus corresponds to a concentration of $\rho_t$ in form of a Dirac measure as time tends to infinity. While both rigorous existence theory and the concentration phenomena of the diffusion process are difficult to study due the degeneracy at $v_f$, we provide two simple results for the case $\sigma=0$ and in the absence of the Heaviside function, i.e., $H^\e\equiv 1$, which indicate the usability of the method. For analytical results for the case $\sigma>0$ and $H^\e\equiv 1$, we refer to \cite{CarrilloChoiTotzeckTse}.

\begin{lemma}\label{lem:concentration}
 Let $\alpha>0$ be arbitrary but fixed and $\rho^\alpha\in\C([0,\infty),\P_2^{ac}(\rr^d))$ satisfy \eqref{eq:model-pde} with $\sigma=0$ and $H^\e\equiv 1$. Then $\rho_t^\alpha \to \delta_{\hat x}$ in the sense of distributions as $t\to\infty$ for some $\hat x\in\rr^d$.
\end{lemma}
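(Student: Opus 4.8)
The plan is to exploit the fact that, for $\sigma=0$ and $H^\e\equiv 1$, equation \eqref{eq:model-pde} is precisely the continuity equation driven by pure attraction of the mass towards the single, law-dependent point $v_f[\rho_t]$; at the level of characteristics this is the flow $\dot X_t=-\lambda(X_t-v_f[\rho_t])$ inherited directly from the drift in \eqref{model-dynamics}. The decisive structural observation is that the \emph{same} center $v_f[\rho_t]$ is felt by every trajectory, so the difference of any two characteristics obeys $\tfrac{d}{dt}(X_t-Y_t)=-\lambda(X_t-Y_t)$ and hence $|X_t-Y_t|=e^{-\lambda t}|X_0-Y_0|$. The support therefore contracts exponentially at the fixed rate $\lambda$, \emph{independently} of how $v_f[\rho_t]$ itself moves; this is the mechanism behind concentration, and I would capture it purely through moments of $\rho_t$ (writing $\rho_t$ for $\rho_t^\alpha$), so as to rely only on the given weak solution rather than on constructing the flow.

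First I would control the spread via the mean $m_t:=\int_{\rr^d}x\,d\rho_t$ and variance $V(t):=\int_{\rr^d}|x-m_t|^2\,d\rho_t$. Testing \eqref{eq:model-pde} against $x$ and against $|x|^2$ and integrating by parts yields the two identities
\[
 \dot m_t = -\lambda\lb m_t-v_f[\rho_t]\rb,\qquad \dot V(t)=-2\lambda\,V(t).
\]
Since $\rho\in\C([0,\infty),\P_2^{ac}(\rr^d))$ has finite second moment, $V(0)<\infty$, and the second identity integrates to $V(t)=e^{-2\lambda t}V(0)\to 0$ as $t\to\infty$ (equivalently $\tfrac12\iint|x-y|^2\,d\rho_t\,d\rho_t=e^{-2\lambda t}\,\tfrac12\iint|x-y|^2\,d\rho_0\,d\rho_0$, which is the pairwise form of the contraction above).

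Next I would show that the mean converges. Setting $Z_t:=\int_{\rr^d}\omega_f^\alpha\,d\rho_t$ and centering gives the exact identity $m_t-v_f[\rho_t]=\int_{\rr^d}(x-m_t)\lb 1-\omega_f^\alpha/Z_t\rb d\rho_t$, so Cauchy--Schwarz together with the uniform bounds $e^{-\alpha\|f\|_\infty}\le\omega_f^\alpha\le 1$ (whence $0\le \omega_f^\alpha/Z_t\le e^{\alpha\|f\|_\infty}$) produces $|m_t-v_f[\rho_t]|\le C_\alpha\sqrt{V(t)}=C_\alpha e^{-\lambda t}\sqrt{V(0)}$ with $C_\alpha$ independent of $t$. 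Feeding this into the first identity yields $|\dot m_t|\le \lambda C_\alpha e^{-\lambda t}\sqrt{V(0)}$, which is integrable on $[0,\infty)$; hence $m_t$ is Cauchy and $m_t\to\hat x$ for some $\hat x\in\rr^d$. Finally, transport to a Dirac is trivial, so
\[
 W_2(\rho_t,\delta_{\hat x})^2=\int_{\rr^d}|x-\hat x|^2\,d\rho_t = V(t)+|m_t-\hat x|^2\longrightarrow 0,
\]
and convergence in $W_2$ implies convergence in distribution, which is the assertion. Note that this argument identifies $\hat x$ only implicitly, through $\rho_0$ and the trajectory $v_f[\rho_t]$, and does not claim $\hat x$ equals the minimizer $x_*$ -- consistent with the earlier discussion of the deterministic regime.

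The main obstacle I anticipate is the rigorous justification of the moment computations: the test functions $x$ and $|x|^2$ are unbounded while we have only $\P_2^{ac}$-regularity with no a priori spatial decay, so differentiating under the integral and integrating by parts must be legitimized through a cutoff $\chi_R(x)$, estimating the resulting boundary and commutator terms by means of the affine (globally $\lambda$-Lipschitz in $x$) velocity field and the finite second moment before passing $R\to\infty$. One also needs $t\mapsto v_f[\rho_t]$ to be continuous so that the two identities hold in an integrated sense; this follows from continuity of $t\mapsto\rho_t$ in $\P_2^{ac}$ together with the uniform weight bounds used above. Once the exponential decay $V(t)=e^{-2\lambda t}V(0)$ and the integrable bound on $\dot m_t$ are secured, the remainder is elementary.
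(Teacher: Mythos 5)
Your proof follows essentially the same route as the paper's: exponential decay of the variance obtained by testing the equation against quadratic moments, convergence of the mean $m_t$ from the identity $\dot m_t=-\lambda(m_t-v_f[\rho_t^\alpha])$ together with the bound $|m_t-v_f[\rho_t^\alpha]|\le C_\alpha\sqrt{V(t)}$ (the paper's constant $c_f=\exp(\alpha(\sup f-\inf f))$ plays exactly the role of your $C_\alpha$, both exploiting boundedness of $f$), and finally concentration of $\rho_t^\alpha$ at the limit point $\hat x$. The only cosmetic differences are that the paper uses the pairwise form $\iint|x-y|^2\,d\rho_t^\alpha\,d\rho_t^\alpha$ of the variance and concludes via Chebyshev's inequality rather than $W_2$-convergence; your remark on justifying the unbounded test functions by a cutoff, and your (correct) integrable rate $|\dot m_t|\lesssim e^{-\lambda t}$, are if anything slightly more careful than the paper's own write-up, which states the bound $\int|x-v_f[\rho_s^\alpha]|\,d\rho_s^\alpha\le c_f V(\rho_0^\alpha)e^{-2\lambda s}$ where the square root $\sqrt{c_f V(\rho_0^\alpha)}\,e^{-\lambda s}$ is what actually follows.
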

\begin{proof}
A simple computation of the evolution of the variance gives
\begin{align*}
 \frac{d}{dt}V(\rho_t^\alpha) &:=\frac{d}{dt}\iint |x-y|^2 d\rho_t^\alpha(x)d\rho_t^\alpha(y) \\
 &= -2\lambda\iint |x-y|^2\,d\rho_t^\alpha(x)\,d\rho_t^\alpha(y) = -2\lambda V(\rho_t^\alpha),
\end{align*}
which directly shows that $V(\rho_t^\alpha)=V(\rho_0^\alpha)e^{-2\lambda t}\to 0$ as $t\to \infty$. Consequently, for any $\e>0$, we have
\[
 \rho_t^\alpha(\{x\in\rr^d\,|\, |x-\E[\X_t]|\ge \e\}) \le \frac{1}{\e^2}\int_{\rr^d} |x-\E[\X_t]|^2d\rho_t^\alpha = \frac{1}{\e^2}V(\rho_t^\alpha).
\]
Passing to the limit $t\to\infty$ on the right-hand side shows the concentration of the measure $\rho_t^\alpha$ around its expectation. To show that its expectation $\E[\X_t]$ converges towards some $\hat x\in\rr^d$ as $t\to\infty$, we compute the evolution of $\E[\X_t]$, which yields
\[
 \frac{d}{dt} \E[\X_t] = -\lambda \int_{\rr^d} (x - v_f[\rho_t^\alpha])\,d\rho_t^\alpha = -\lambda (\E[\X_t] - v_f[\rho_t^\alpha]).
\]
On the other hand, we have
\[
 \left| \E[\X_t] - v_f[\rho_t^\alpha] \right|^2 \le \int_{\rr^d} |x-v_f[\rho_t^\alpha]|^2 d\rho_t^\alpha \le  c_f V(\rho_t^\alpha) \le c_f V(\rho_0^\alpha)e^{-2\lambda t},
\]
with $c_f = \exp(\alpha(\sup f - \inf f))$. Therefore, we further obtain
\begin{align*}
 |\E[\X_t] - \E[\X_0]| &\le \lambda\int_0^t \int_{\rr^d} |x - v_f[\rho_s^\alpha]|\,d\rho_s^\alpha \,ds \le \lambda c_f V(\rho_0^\alpha) \int_0^t e^{-2\lambda s}\,ds \\
 &= (c_f/2) V(\rho_0^\alpha)\left(1-e^{-2\lambda t}\right),
\end{align*}
which shows that $|\hat x - \E[\X_0]| = \lim_{t\to\infty}|\E[\X_t] - \E[\X_0]| \le (c_f/2) V(\rho_0^\alpha)$, i.e., $\hat x\in \rr^d$. Furthermore, since $|\E[\X_t] - v_f[\rho_t^\alpha]|\to 0$, we also have that $\lim_{t\to\infty} v_f[\rho_t^\alpha] = \hat x$.
\end{proof}

By assuming more regularity on the objective function $f$ we can even show that the point $v_f$ converges, as $t\to\infty$, towards a neighborhood of the global minimizer $x_*$. Furthermore, this neighborhood may be made arbitrarily small by choosing $\alpha\gg 1$ sufficiently large.

\begin{lemma}
Let $f$ have the form $f=g + \chi$, where
\begin{enumerate}
 \item $g\in\C^2(\rr^d)$ globally strongly convex, i.e.,
 \[
  \langle x-y,\nabla g(x)-\nabla g(y)\rangle \ge m_g|x-y|^2,
 \]
 for some constant $m_g>0$.
 
 \item $\nabla\chi$ is globally Lipschitz continuous and bounded with $c_\chi := \text{Lip}(\nabla \chi) \le m_g$, where $\text{Lip}(\nabla \chi)$ denotes the Lipschitz constant of $\nabla\chi$.
\end{enumerate}
 Then, for any $\e>0$, there exists an $\bar \alpha>0$, such that 
 \[
  \lim\nolimits_{t\to\infty} f(v_f[\rho_t^{\bar \alpha}]) \le f(x_*) + \e,
 \]
 where $\rho^{\bar \alpha}\in\C([0,\infty),\P_2^{ac}(\rr^d))$ is a solution of \eqref{eq:model-pde} with $\sigma=0$ and $H^\e\equiv 1$. In particular, there exists some $\delta>0$ such that $\lim\nolimits_{t\to\infty} v_f[\rho_t^{\bar \alpha}]\in B_\delta(x_*)$.
\end{lemma}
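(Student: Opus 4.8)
The plan is to reduce everything to locating the limit point $\hat x$ supplied by Lemma~\ref{lem:concentration} and then to bound $f(\hat x)$ from above by $f(x_*)$ up to an error that vanishes as $\bar\alpha\to\infty$. First I would record two structural consequences of the hypotheses. Adding the two monotonicity estimates gives $\langle x-y,\nabla f(x)-\nabla f(y)\rangle\ge(m_g-c_\chi)|x-y|^2\ge0$, so $f$ is convex, and strongly convex with modulus $m:=m_g-c_\chi$ whenever $c_\chi<m_g$; in particular the global minimizer $x_*$ is unique and $f(x)-f(x_*)\ge\tfrac{m}{2}|x-x_*|^2$. By Lemma~\ref{lem:concentration} (applied with the fixed value $\alpha=\bar\alpha$) one has $v_f[\rho^{\bar\alpha}_t]\to\hat x$ and $\rho^{\bar\alpha}_t\to\delta_{\hat x}$ in distribution, so $\lim_{t\to\infty}f(v_f[\rho^{\bar\alpha}_t])=f(\hat x)$ by continuity of $f$. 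Thus it suffices to prove $f(\hat x)\le f(x_*)+\e$ for $\bar\alpha$ large; the ball statement then follows from the quadratic lower bound with $\delta=\sqrt{2\e/m}$ (or, in the degenerate case $c_\chi=m_g$, from uniqueness of $x_*$ together with coercivity of $f$).

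The engine of the proof is the Laplace functional $\mathcal{L}_\alpha[\rho]:=-\tfrac1\alpha\log\int_{\rr^d}e^{-\alpha f}\,d\rho$, for which I would establish three facts about $t\mapsto\mathcal{L}_{\bar\alpha}[\rho^{\bar\alpha}_t]$. (i) Since $\rho^{\bar\alpha}_t\to\delta_{\hat x}$ and $e^{-\bar\alpha f}$ is continuous and bounded, $\mathcal{L}_{\bar\alpha}[\rho^{\bar\alpha}_t]\to f(\hat x)$. (ii) By Proposition~\ref{prop:laplace} applied to $\rho_0$, $\mathcal{L}_{\bar\alpha}[\rho_0]\to f(x_*)$ as $\bar\alpha\to\infty$. (iii) Using the equation \eqref{eq:model-pde} with $\s=0$, $H^\e\equiv1$ and integrating by parts, a direct computation yields, with $\eta^{\bar\alpha}_t:=e^{-\bar\alpha f}\rho^{\bar\alpha}_t/\|e^{-\bar\alpha f}\|_{L^1(\rho^{\bar\alpha}_t)}$ and $v_f=\int x\,d\eta^{\bar\alpha}_t$,
\[
\frac{d}{dt}\mathcal{L}_{\bar\alpha}[\rho^{\bar\alpha}_t]=\lambda\!\int_{\rr^d}\!\nabla f\cdot(x-v_f)\,d\eta^{\bar\alpha}_t=\tfrac{\lambda}{2}\iint\langle\nabla f(x)-\nabla f(y),x-y\rangle\,d\eta^{\bar\alpha}_t(x)\,d\eta^{\bar\alpha}_t(y)\ge0,
\]
the sign coming from convexity. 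Combining the three facts gives, for each fixed $\bar\alpha$, the exact identity $f(\hat x)-f(x_*)=\big(\mathcal{L}_{\bar\alpha}[\rho_0]-f(x_*)\big)+\int_0^\infty\frac{d}{dt}\mathcal{L}_{\bar\alpha}[\rho^{\bar\alpha}_t]\,dt$, whose first summand is controlled by (ii).

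Everything therefore hinges on making the time integral of the (nonnegative) increment small, uniformly as $\bar\alpha\to\infty$, and this is the main obstacle. Bounding $\langle\nabla f(x)-\nabla f(y),x-y\rangle\le\mathrm{Lip}(\nabla f)\,|x-y|^2$ reduces the integrand to $\mathrm{Lip}(\nabla f)\,\mathrm{Var}(\eta^{\bar\alpha}_t)$, so the task is to control the weighted variance $\mathrm{Var}(\eta^{\bar\alpha}_t)$. The naive reweighting estimate $\mathrm{Var}(\eta^{\bar\alpha}_t)\le e^{\bar\alpha\,\mathrm{osc}(f)}\mathrm{Var}(\rho^{\bar\alpha}_t)$ (as in Lemma~\ref{lem:concentration}) is useless here, since the constant $c_f=e^{\bar\alpha\,\mathrm{osc}(f)}$ blows up in $\bar\alpha$; the point is to beat this growth by exploiting convexity. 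I would play off two competing bounds: a decay bound $\mathrm{Var}(\eta^{\bar\alpha}_t)\le C\,e^{-2\lambda t}$ from the self-similar structure of the flow (for $\s=0$, $H^\e\equiv1$ the characteristics are affine, $\rho^{\bar\alpha}_t=(T_t)_\#\rho_0$ with $T_t(x)=e^{-\lambda t}x+c(t)$, whence $V(\rho^{\bar\alpha}_t)=e^{-2\lambda t}V(\rho_0)$), and a uniform bound $\mathrm{Var}(\eta^{\bar\alpha}_t)\le C/\bar\alpha$ from strong convexity via a Brascamp--Lieb / Bakry--\'Emery estimate on the log-concave weight $e^{-\bar\alpha f}$. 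Splitting the integral at the crossover $t^\ast\sim\tfrac1{2\lambda}\log\bar\alpha$ then gives $\int_0^\infty\mathrm{Var}(\eta^{\bar\alpha}_t)\,dt=O(\bar\alpha^{-1}\log\bar\alpha)\to0$, closing the estimate. The two delicate points I expect to fight with are the global Lipschitz control of $\nabla f$ (only $\nabla\chi$ is assumed Lipschitz, so for $\nabla g$ one must either impose $L$-smoothness or localize, using that $\eta^{\bar\alpha}_t$ concentrates near $\hat x$) and justifying the weighted-variance decay, where the spectral-gap estimate presupposes log-concavity of $\rho_0$ that the statement does not explicitly grant.
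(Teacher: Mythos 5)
Your overall architecture (a Laplace functional evaluated along the flow, compared at $t=0$ via Proposition~\ref{prop:laplace} and at $t=\infty$ via Lemma~\ref{lem:concentration}) is sound, but your fact (iii) contains a sign error, and that error is what generates the entire second half of your proposal. For the attractive dynamics $\partial_t\rho_t=\div\!\left(\lambda(x-v_f[\rho_t])\rho_t\right)$ — the case $\sigma=0$, $H^\e\equiv1$ of \eqref{eq:model-pde}, and the same dynamics you rely on when you invoke Lemma~\ref{lem:concentration} and the affine characteristics $T_t(x)=e^{-\lambda t}x+c(t)$ — one has, writing $Z_t=\int_{\rr^d}e^{-\bar\alpha f}\,d\rho_t^{\bar\alpha}$,
\begin{align*}
\frac{d}{dt}\mathcal{L}_{\bar\alpha}[\rho_t^{\bar\alpha}]
=-\frac{1}{\bar\alpha}\frac{\dot Z_t}{Z_t}
&=-\lambda\int_{\rr^d}\nabla f\cdot(x-v_f)\,d\eta_t^{\bar\alpha}\\
&=-\frac{\lambda}{2}\iint_{\rr^d\times\rr^d}\langle\nabla f(x)-\nabla f(y),x-y\rangle\,d\eta_t^{\bar\alpha}(x)\,d\eta_t^{\bar\alpha}(y)\;\le\;0,
\end{align*}
the final inequality by exactly the convexity you established; the minus sign you dropped comes from $\nabla e^{-\bar\alpha f}=-\bar\alpha e^{-\bar\alpha f}\nabla f$ in the integration by parts. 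So the Laplace functional is \emph{non-increasing}, not non-decreasing, along the flow. This is precisely the paper's monotonicity statement in disguise: the paper works with $\F_\alpha(\rho_t)=\log\int e^{-\alpha f}d\rho_t=-\alpha\,\mathcal{L}_\alpha[\rho_t]$ and proves $\frac{d}{dt}\F_\alpha\ge0$, which is inequality \eqref{eq:uniform}.

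The good news is that the correct sign makes your own decomposition close instantly: in $f(\hat x)-f(x_*)=\bigl(\mathcal{L}_{\bar\alpha}[\rho_0]-f(x_*)\bigr)+\int_0^\infty\frac{d}{dt}\mathcal{L}_{\bar\alpha}[\rho_t^{\bar\alpha}]\,dt$ the time integral is nonpositive and can simply be dropped, so $f(\hat x)\le\mathcal{L}_{\bar\alpha}[\rho_0]\le f(x_*)+\e$ for $\bar\alpha$ large, by your facts (i) and (ii). Hence the entire ``main obstacle'' — the weighted-variance decay, the Brascamp--Lieb estimate, the crossover splitting at $t^*\sim\log\bar\alpha/(2\lambda)$ — is an artifact of the sign error: you are fighting a term that is actually on your side. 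This is fortunate, because that part of the plan does not stand on its own: $\nabla f$ need not be globally Lipschitz (only $\nabla\chi$ is assumed Lipschitz, while $g$ is merely $\C^2$ and strongly convex), the Brascamp--Lieb bound requires log-concavity of $\eta_t^{\bar\alpha}$, hence of $\rho_0$, which is not assumed, and it degenerates entirely in the admissible case $c_\chi=m_g$, where the convexity modulus $m_g-c_\chi$ of $f$ vanishes; moreover the reweighting constant $e^{\bar\alpha\,\mathrm{osc}(f)}$ is not even finite here since $f$ is unbounded. Once the sign is fixed, your endgame — identifying $\lim_{t\to\infty}\mathcal{L}_{\bar\alpha}[\rho_t^{\bar\alpha}]=f(\hat x)$ by weak convergence — is actually a slightly cleaner conclusion than the paper's, which instead differentiates \eqref{eq:uniform} with respect to $\alpha$ to get $\int f\,d\eta_t^\alpha\le\int f\,d\eta_0$ and then combines Jensen's inequality for $g$ with the concentration of $\eta_t^{\bar\alpha}$; but do keep your own caveat that in the degenerate case $c_\chi=m_g$ the final $\delta$-ball statement must fall back on coercivity and uniqueness of $x_*$ rather than on quadratic growth.
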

\begin{proof}
 We begin the proof by estimating the evolution of the following functional
 \[
  \F_\alpha(\rho_t^\alpha):= \log\left(\int_{\rr^d} e^{-\alpha f} d\rho_t^\alpha \right).
 \]
 Taking its time derivative gives
 \begin{align*}
  \frac{d}{dt} \F_\alpha(\rho_t^\alpha) 
  &= \lambda\alpha\int_{\rr^d} \langle \nabla g(x)-\nabla g(v_f[\rho_t^\alpha]),x-v_f[\rho_t^\alpha]\rangle\,d\eta_t^\alpha \\
  &\hspace*{12em}+ \lambda\int_{\rr^d} \nabla \chi(x)\cdot (x-v_f[\rho_t^\alpha])\,d\eta_t^\alpha,
 \end{align*}
 where we used the fact that
 \[
  \int_{\rr^d} (x-v_f[\rho_t^\alpha])\,d\eta_t^\alpha = 0.
 \]
 Using the assumptions on $f$, we estimate from below to obtain 
 \[
  \frac{d}{dt} \F_\alpha(\rho_t^\alpha) \ge \lambda\alpha\big(m_g -c_\chi\big)\int_{\rr^d} |x-v_f[\rho_t^\alpha]|^2 d\eta_t^\alpha \ge 0,
 \]
 which directly implies that
 \begin{align}\label{eq:uniform}
  -\log\left(\int_{\rr^d} e^{-\alpha f} d\rho_t^\alpha \right) = -\F_\alpha(\rho_t^\alpha) \le -\F_\alpha(\rho_0) = -\log\left(\int_{\rr^d} e^{-\alpha f} d\rho_0 \right),
 \end{align}
 for all $t\ge 0$. On the other hand, we have by definition that
 \[
  \frac{d}{d\alpha}\F_\alpha(\rho_t^\alpha) = -\int_{\rr^d} f\,d\eta_t^\alpha,
 \]
 and by differentiation of \eqref{eq:uniform} w.r.t.~$\alpha$, we obtain
 \[
  \int_{\rr^d} f\,d\eta_t^\alpha \le \int_{\rr^d} f\,d\eta_0.
 \]
 From Proposition~\ref{prop:laplace}, we further obtain the existence of an $\bar \alpha\gg 1$ such that 
 \[
  \int_{\rr^d} (f-f_*)\,d\eta_t^{\bar \alpha} \le \int_{\rr^d} (f-f_*)\,d\eta_0 \le \e.
 \]
 We now proceed to estimate $f(v_f[\rho_t^{\bar\alpha}])$ as follows:
 \begin{align*}
  f(v_f[\rho_t^{\bar\alpha}]) &= g(v_f[\rho_t^{\bar\alpha}]) + \chi(v_f[\rho_t^{\bar\alpha}]) \le \int_{\rr^d} g(x)\,d\eta_t^{\bar\alpha} + \chi(v_f[\rho_t^{\bar\alpha}]) \\
  &= \int_{\rr^d} f(x)\,d\eta_t^{\bar\alpha} + \chi(v_f[\rho_t^{\bar\alpha}]) - \int_{\rr^d} \chi(x)\,d\eta_t^{\bar\alpha}  \\
  &\le \int_{\rr^d} f(x)\,d\eta_t^{\bar\alpha} + \|\nabla\chi\|_\infty\int_{\rr^d} |x-v_f[\rho_t^{\bar\alpha}]|d\eta_t^{\bar\alpha},
 \end{align*}
 where we made use of Jensen's inequality in the first inequality. Consequently,
 \begin{align*}
  f(v_f[\rho_t^{\bar\alpha}]) - f_* &\le \int_{\rr^d} (f-f_*)\,d\eta_t^{\bar\alpha} + \|\nabla\chi\|_\infty\int_{\rr^d} |x-v_f[\rho_t^{\bar\alpha}]|d\eta_t^{\bar\alpha} \\
  &\le \e + \|\nabla\chi\|_\infty\int_{\rr^d} |x-v_f[\rho_t^{\bar\alpha}]|d\eta_t^{\bar\alpha}.
 \end{align*}
 Since the last term on the right hand side converges to zero as $t\to\infty$ (cf.~Lemma~\ref{lem:concentration}), we may pass to the limit to obtain
 \[
  \lim\nolimits_{t\to\infty} f(v_f[\rho_t^{\bar\alpha}]) \le f_* + \e.
 \]
 Due to continuity of $f$, we find some $\delta>0$ such that $\lim_{t\to\infty} v_f[\rho_t^{\bar\alpha}] \in B_\delta(x_*)$, where $\bar\alpha$ needs to be chosen even larger if necessary.
\end{proof}

\section{Numerical Experiments in $d=1$}\label{sec:numerics}
In this section we study the performance of the consensus-based optimization algorithm and investigate, in particular, the relation of the particle system and the mean-field PDE. We employ two standard test cases from optimization literature \cite{jamil2013literature}, namely the \emph{Ackley function}  
\begin{equation}\label{eq-ackley}
 f_A(x) = -20 \exp\!\bigg(\!\!-\frac{0.2}{\sqrt{d}}\|x-B\|\bigg) - \exp\!\bigg(\frac{1}{d}\sum_{i=1}^d \cos(2\pi (x_i-B) )\bigg) + 20 + e + C
\end{equation}
and the \emph{Rastrigin function} 
\begin{equation}\label{eq-rastrigin}
 f_R(x)=  \frac{1}{d} \sum_{i=1}^d \left[(x_i-B)^2 - 10 \cos(2\pi (x_i-B))  + 10 \right] + C, 
\end{equation}
where $d \in \nn$ denotes the dimension of the search space and $B,C\in\rr$ are constant shifts. As seen in Fig.~\ref{fig-3benchmarks}, both functions attain multiple local minima but only one global minimum.
\begin{figure}[h]
\centering
\begin{subfigure}{.49\textwidth}
 \includegraphics[keepaspectratio=true,width=\textwidth]{./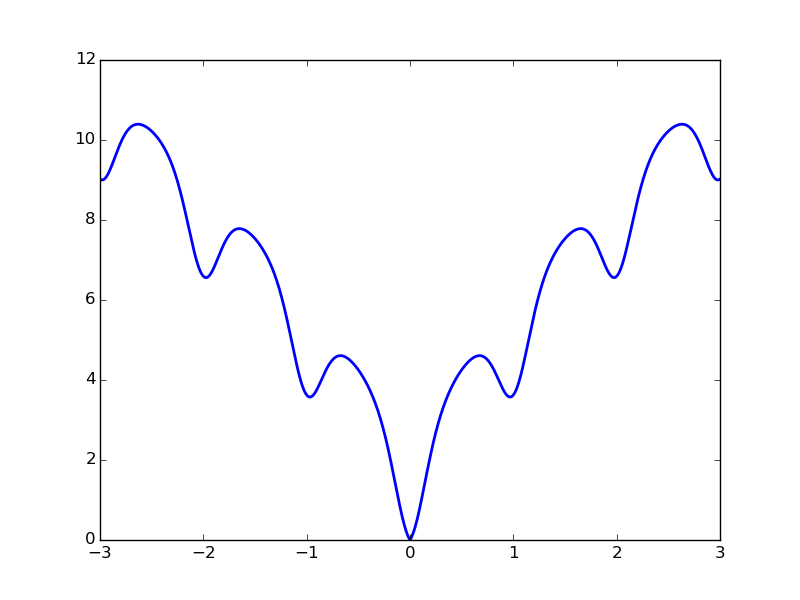}
 \caption{Ackley}
\end{subfigure}
\begin{subfigure}{.49\textwidth}
 \includegraphics[keepaspectratio=true,width=\textwidth]{./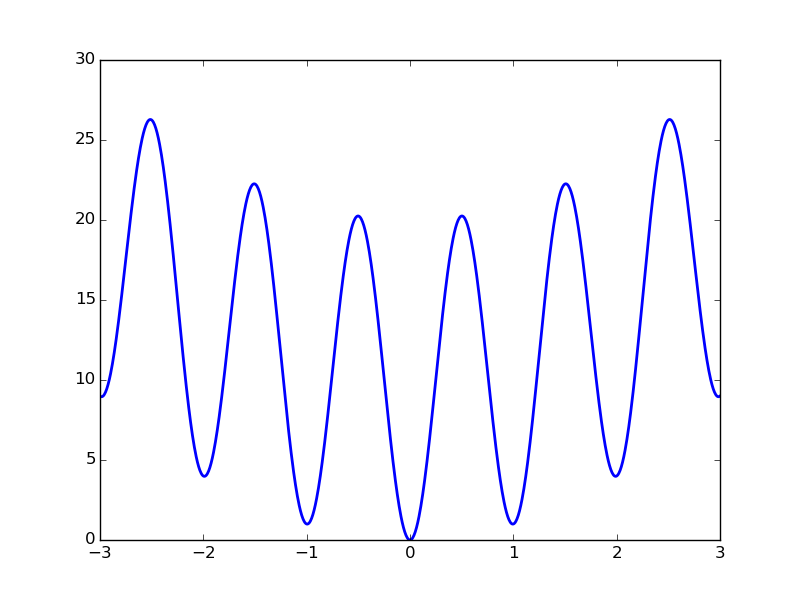}
 \caption{Rastrigin}
\end{subfigure}
\caption{Benchmark functions}\label{fig-3benchmarks}
\end{figure}

\subsection{Numerical methods}
For the stochastic system we use a simple particle scheme, while the mean-field PDE is discretized with the help of a splitting scheme in time and a discontinuous Galerkin method in space. For the numerical experiments we choose $\lambda=1$.
 
\subsubsection{Particle scheme}
The particle simulations are realized using the standard Euler-Maruyama scheme \cite{Higham}. We recall the particle equations:
\begin{equation}\label{eq:EM}
 dX_t^{i} = -\lambda(X_t^i-v_f) H^\epsilon(f(X_t^i)-f(v_f))dt + \sqrt{2}\sigma|X_t^{i}-v_f|dW_t^i,
\end{equation}
where the Heaviside function is approximated by 
\begin{equation*}
 H^\epsilon(x) = \frac{1}{2} \erf(\frac{1}{\epsilon} x) + \frac{1}{2}. 
\end{equation*}
Note the stochasticity of the  particle scheme induced by the Brownian motion $W_t$ in \eqref{eq:EM} (cf. the discussion in Lemma~\ref{lem:stationary}). Thus, to get meaningful results we simulate $M$ samples with different initial data and different realizations of the Brownian motion. As random number generator we use the \texttt{numpy\@.random} package of \texttt{python} which is based on the Mersenne Twister pseudo-random number generator \cite{MersenneTwister}.

\subsubsection{Galerkin scheme for the mean-field equation}
The solver for the mean-field PDE uses a discontinuous Galerkin method in space. A splitting of the transport and diffusion part is used to compute the solution iteratively. In fact, instead of solving the full equation
\begin{equation*}
 \partial_t \rho - \div(\mu[\rho] \rho) = \Delta(\kappa[\rho] \rho ),
\end{equation*}
one half-step in time is computed for the convection part 
\begin{equation}\label{convection}
 \partial_t\rho_* - \div(\mu[\rho_{k-1}] \rho_*) =0,\qquad \rho_*(0)=\rho_{k-1}
\end{equation}
with local Lax--Friedrichs method, followed by one semi-implicit time step for the diffusion part
\begin{equation}\label{diffusion}
 \partial_t\rho_{**} = \Delta (\kappa[\rho_*] \rho_{**}),\qquad \rho_{**}(0)=\rho_{*},
\end{equation}
and finally one half-step of \eqref{convection} again to obtain $\rho_k$. Here $\rho_*$, $\rho_{**}$ denotes the intermediate solutions after the first and second step, respectively. This provides a semi-implicit scheme of second order, typically known as {\em Strang splitting} \cite{Strang}.

Since we expect very large spatial gradients in the solution due to the convergence to a Dirac function (see Lemma \ref{lem:concentration}), we use discontinuous Galerkin elements. Hence, there is need for an appropriate handling of the numerical fluxes. For any smooth test function $\test$ the convective part in weak form reads as
\begin{equation*}
 \int_K \rho_* \,\test\,dx = \int_K \rho_{k-1} \,\test\,dx + \tau\int_K \mu_{k-1} \rho_{k-1} \cdot \nabla \test\,dx - \tau\int_{\partial K} \overline{\mu_{k-1} \rho_{k-1}} \cdot \textbf{n}\, \test\, ds,
\end{equation*}
for any interval $K\subset\rr$ in a given partition $\mathcal T_h$ with $\mu_{k-1}= \mu[\rho_{k-1}]$. As mentioned, the local Lax--Friedrichs numerical flux \cite{cockburn2003discontinuous} is implemented, i.e.,
\begin{equation*}
 \overline{\mu\rho}  = \mu \{\rho\} + \frac{1}{2} |\mu| \ldb \rho \rdb,
\end{equation*}
where 
\begin{equation*}
\{\rho\} = \frac{1}{2}(\rho^+ + \rho^-)\qquad \text{and}\qquad \ldb \rho \rdb = \rho^+\textbf{n} + \rho^- \textbf{n},
\end{equation*}
 are the average and jump operators respectively. For the diffusion part the weak formulation reads
\begin{align*}
 \int_K \rho_{**}\, \test\,dx + \tau \int_K \nabla (\kappa_* \rho_{**}) \cdot \nabla \test\,dx - \tau\int_{\partial K} \overline{\kappa_* \rho_{**}\, \test}\,dx = \int_K \rho_* \test\,dx 
\end{align*}
where $\kappa_*=\kappa[\rho_*]$ and the flux function 
\begin{equation*}
 \overline{\kappa \rho\, \test} = \ldb \test\rdb \{ \nabla (\kappa\rho) \}\cdot \textbf{n} + \ldb \kappa\rho \rdb \{ \nabla \test \} \cdot \textbf{n}  - \ldb \kappa\rho \rdb \ldb \test \rdb,
\end{equation*}
proposed in \cite{kulkarni2007discontinuous} is used.

\subsection{CBO in $d=1$}
In the following we show the results of the computations realized with the numerical methods described above. The parameters for the particle simulation are 
\[
 N=50,\quad dt=10^{-1},\quad \alpha = 40,\quad \sigma = 0.7,\quad M = 500,\quad T = 80.
\]
The initial positions of the particles are chosen randomly due to the uniform distribution in $[-3,3]$ with help of the random number generator \texttt{numpy.random} of the \texttt{python} software. The particle simulation is stopped when the final time $T=80$ is reached, i.e., after $T/dt$ iterations. Due to the stochasticity of the Euler-Maruyama scheme, we compute $M$ samples of each testcase and average over the samples to obtain meaningful results. The parameters for the mean-field simulations are
\[
 \alpha = 40,\quad \sigma = 0.7,\quad \tol = 10^{-3},\quad h = 10^{-2}.
\]
Here, $h$ denotes uniform spatial grid size. The stopping criterion for the mean-field simulation is
\[
 \|\rho_k - \rho_{k-1}\|_{L^2(\Omega)} < \tau_k\,\tol,
\]
where $\rho_k$ denotes the solution at timestep $k$ and $\tau_k$ the current timestep, which is required to satisfy the CFL condition 
\[
 \tau_k < h / \max(|\mu|),
\]
in order to resolve the drift term. The initial distribution of particles and $\rho$ is the uniform distribution on $[-3,3]$. Note that the PDE is deterministic, we therefore need not compute different samples in the mean-field case. We ran the PDE simulation until the stopping criterion was fulfilled in order to fix an appropriate stopping time for the particle simulation. The mean-field simulations for the Ackley benchmark stopped at $T= 5.7$ and $T=25$ in the standard and the shifted case, respectively. The Rastrigin benchmark stopped at $T=58$ and $T=78$ in the corresponding cases. Thus, we fixed the time horizon of the particle simulation at $T=80$.
\begin{figure}[h]
\centering
\begin{subfigure}{.49\textwidth}
 \includegraphics[keepaspectratio=true,width=\textwidth]{./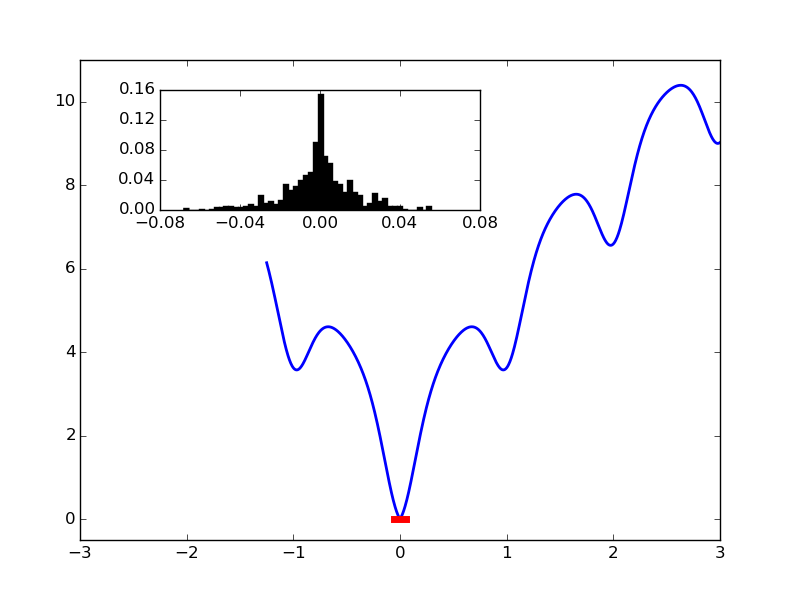}
 \caption{Ackley: $B=C=0$, $\arg\min f_A=0$}
\end{subfigure}
\begin{subfigure}{.49\textwidth}
 \includegraphics[keepaspectratio=true,width=\textwidth]{./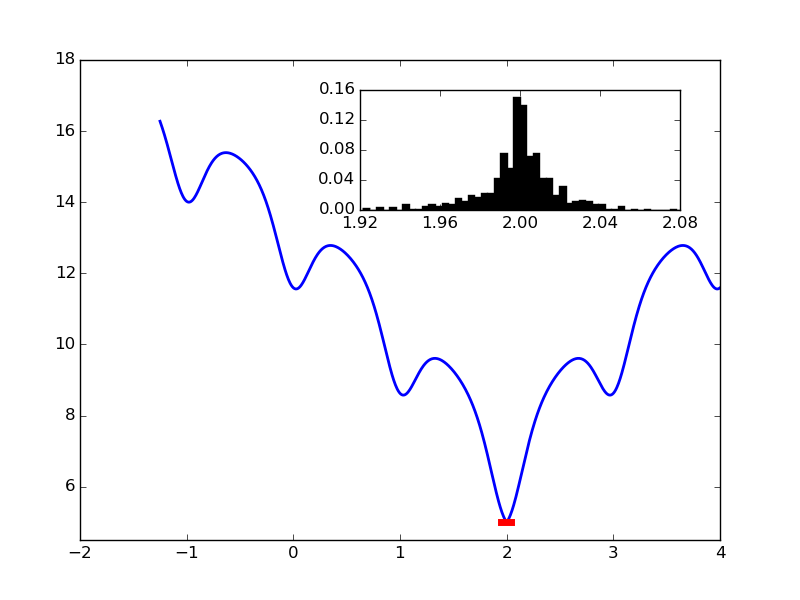}
 \caption{Ackley: $B=2$, $C=5$ $\arg\min f_A=2$}
\end{subfigure}\\
\begin{subfigure}{.49\textwidth}
 \includegraphics[keepaspectratio=true,width=\textwidth]{./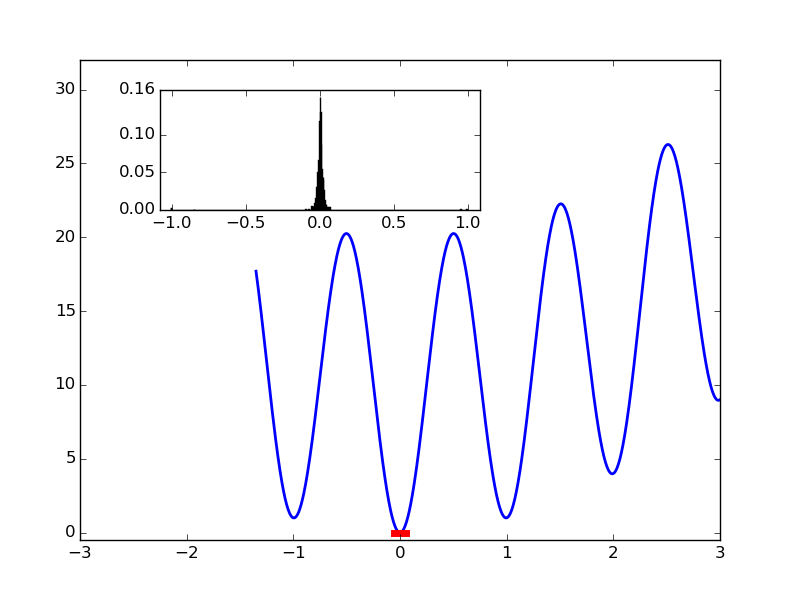}
 \caption{Rastrigin: $B=C=0$, $\arg\min f_R =0$}
\end{subfigure}
\begin{subfigure}{.49\textwidth}
 \includegraphics[keepaspectratio=true,width=\textwidth]{./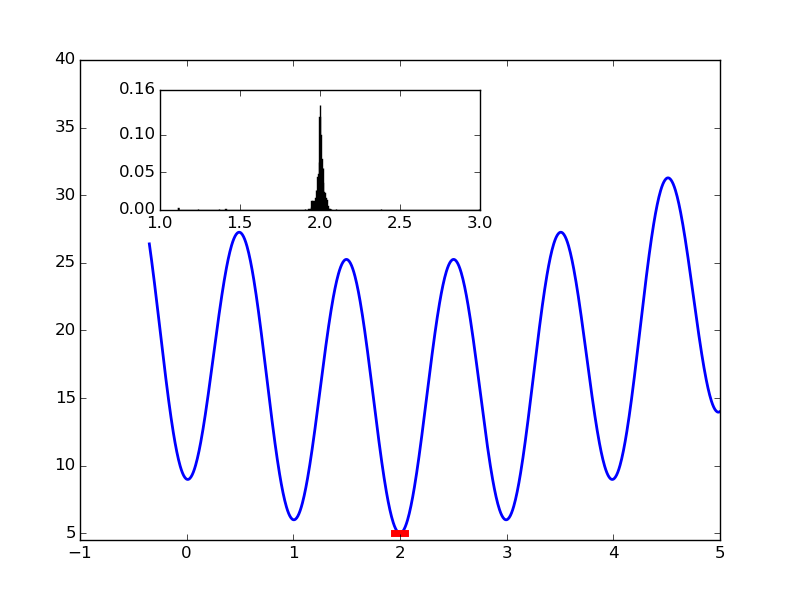}
 \caption{Rastrigin: $B=2$, $C=5$, $\arg\min f_R = 2$}
\end{subfigure}
\caption{Optimization results in 1d. The support of the mean-field density is depicted in red. The histograms show the distribution of $v_f$ for the 500 samples. The histogram for (a) and (b) contains 50 bins. The histogram of (c) and (d) has 500 bins, since $v_f$ is located at a local minima neighboring the global minimum at time $T=80$ in a few samples.}
\label{fig:1Dbenchmarks}
\end{figure}

Figure~\ref{fig:1Dbenchmarks} shows the results for the particle and the mean-field simulations in spatial dimension one. The objective function is depicted in blue, the red line denotes the support of the PDE solution at final time. The histogram in the upper left corner of the plot shows the final positions of the $v_f$ computed by $M=500$ runs with different random initial data.

\begin{remark}
Note, that due to the diffusion, the support of $\rho$ will not concentrate into a single point in the given time interval $[0,T]$. Still we expect the density to concentrate in a small region. Hence, in Fig.~\ref{fig:1Dbenchmarks} we denote the domain where $\rho > 10^{-6}$ by support. 
\end{remark}

The shifted case is interesting since the location of the minimum is less centred with respect to the initial distribution. The proposed algorithm determines a remarkable approximation of the minimum of the Ackley function in the unshifted as well as in the shifted case in every sample of the particle problem. Note that the minimizer candidates $v_f$ are located close to the center of the support resulting from the mean-field simulation for all samples. This last property holds for the Rastrigin function as well. Here, the range of $v_f$ realized by the 500 samples is larger in the shifted case. The realizations of $v_f$ resulting from the simulations are still contained in the support of the mean-field solution except for a few that are located at local minima neighboring the global one. Thus averaging over various realizations allows for a very reasonable approximation of the global minimum.

\subsection{Convergence towards mean-field equation in $d=1$}
In this section, we consider the Ackley function with $C=1$, $x_*=0$, $f_A(x_*)=1$ and particles initially equidistantly distributed on $[-3,1]$. To study the convergence of an evolution of the particle dynamics towards the uniform consensus at $x_*$, we compute the Wasserstein distance \cite{Villani}
\begin{equation}
 W_1\left(\rho_t^N, \delta_{x_*}\right)\quad\text{with}\quad \rho_t^N = \frac{1}{N}\sum\nolimits_i\delta_{X_t^i},
\end{equation}
which is a time-dependent and stochastic quantity, for a sample of $M$ system trajectories. 
Note that in the case $d=1$, one has the representation \cite{Villani}
\begin{equation*}
W_1\left(\rho_t^N, \delta_{x_*}\right) = \int_0^1 |F^{-1}(t) - G^{-1}(t)| dt,
\end{equation*}
where $F$ and $G$ are the cumulative distribution functions associated with $\rho_t^N$ and $\delta_{x_*}$, respectively. This formulation is used to compute the results in the following. For the limiting continuous case with $\rho_0=\mathcal{U}([-3,1])$, we have $W_1(\rho_0,\delta_0)=1.25$. 
\begin{figure}[h]
\centering
\begin{subfigure}{0.49\textwidth}
 \includegraphics[keepaspectratio=true,width=\textwidth]{./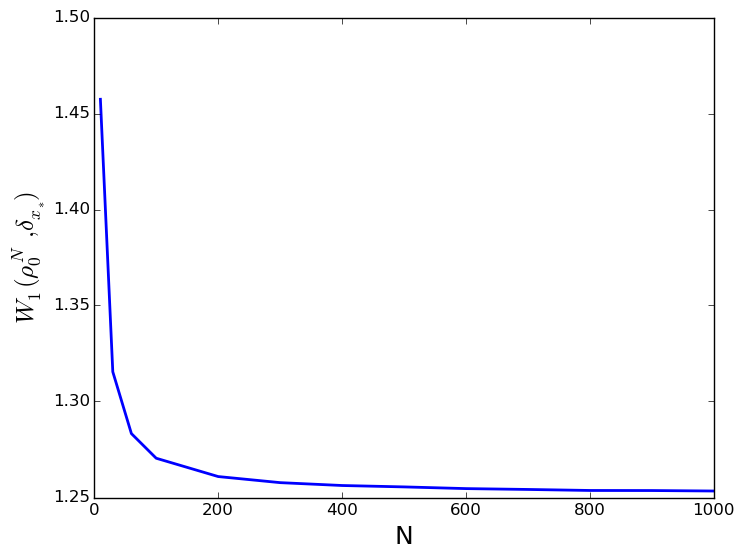}
 \caption{$W_1(\rho_0^N, \delta_{x_*})$}
\end{subfigure}
\begin{subfigure}{0.49\textwidth}
 \includegraphics[keepaspectratio=true,width=\textwidth]{./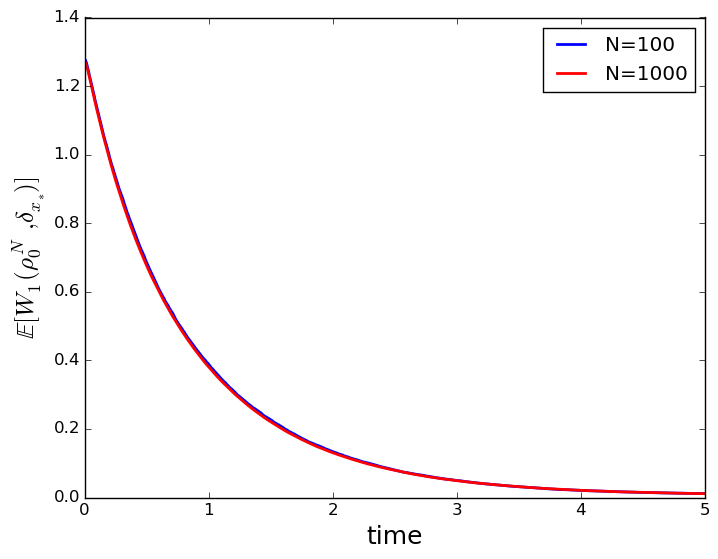}
 \caption{$\E[W_1(\rho_t^N,\delta_{x_*})]$}
\end{subfigure}\\
\begin{subfigure}{0.49\textwidth}
 \includegraphics[keepaspectratio=true,width=\textwidth]{./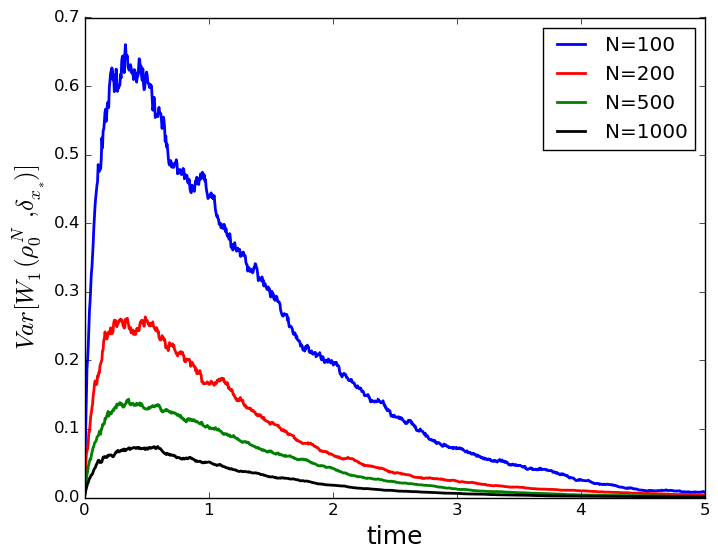}
 \caption{$\text{Var}[W_1(\rho_t^N, \delta_{x_*})]$}
\end{subfigure}
\caption{Convergence of CBO \eqref{eq:particle} to uniform consensus for the 1d Ackley function.}\label{fig-w1}
\end{figure}

Fig.~\ref{fig-w1}(a) shows that the $W_1$-distance of the particle configuration at $t=0$ to the uniform consensus $\delta_{x_*}$ approaches the value in the continuous uniformly distributed case as the number of particles increases. Fig.~\ref{fig-w1}(b) depicts an exponential decay of the mean of the $W_1$ distance for varying number of particles $N=100,1000$. The mean is taken over $M=1000$ realizations. Notice that the expectations are essentially identical for the different values of $N$. The variance of the $W_1$-distance is initially zero and decays after a peak, as seen in Fig.~\ref{fig-w1}(c). Furthermore, it decreases uniformly as $N\rightarrow\infty$, highlighting that the dynamics are governed by the deterministic mean-field equation in the limit.

\section{Numerical Experiments in Multiple Dimensions ($d=20$)}\label{sec:multi}
Efficient algorithms for global optimization become practically relevant for high-dimensional problems, where grid-based procedures exceed computational capabilities. We illustrate the performance of the CBO algorithm \eqref{eq:particle} with several toy experiments in dimension $d=20$. 

Assuming that the solution to the mean-field equation shows concentration arbitrarily close to $x_*$ for sufficiently large $\alpha$, the particle dynamics will do so too. However, we theoretically expect the number of particles necessary to mimic the mean-field dynamics to scale with $\mathcal{O}(N^{d/2})$. Strikingly, good optimization results are already obtained for a \emph{much smaller} number of particles, as shown in the following examples using the Ackley function and the Rastrigin function as benchmarks.

In order to classify the numerical results we will make use of a success rate and the expectation of $\frac{1}{d}\norm{v_f - x_*}^2$ computed with $M =1000$ realizations of the CBO algorithm. Due to the radial symmetry of the benchmark function the evaluation of the norm is justified. We consider a run to be successful if $v_f\in Q_{0.25}(x_*)=(x_*-0.25,x_*+0.25)^d$, i.e., our candidate for the minimizer is contained in the open $\|\cdot\|_\infty$-ball with radius 0.25 around the true minimizer $x_*$. Note that this condition is more restrictive than requiring $v_f$ to be located closer to the minimum than the closest local maximum, which would be $v_f \in Q_{0.5}(x_*)$.  This would be a valid definition for success as well, since a steepest descent search initialized in $v_f$ would find the global minimizer. Nevertheless, by this definition all simulation results would lead to successful runs and we would not get an impression of the influence of the parameters. This is the reason we use the more restrictive definition of success. The following parameters are used for the simulations
\begin{center}
$d=20$,\, $T=10$,\, $M=1000$,\, $dt=0.01$,\, and $\sigma = 5$.
\end{center}
To initialize the particle positions we draw random variables that are uniformly distributed in the hypercube $[-3,3]^d$. Thus, for $x_*=0$ the minimizer is located at the center of the initial data. 

\subsection{Variations in particle number $N\in\nn$}

We begin by investigating the influence of the number of particles $N$ on the numerical results. Table~\ref{tab-D20-A} and Table~\ref{tab-D20-R} show the success rate of the runs and the expectation of the distances $\frac{1}{d}\norm{v_f - x_*}^2$ for the Ackley and the Rastrigin benchmark functions, respectively. In case of the Ackley function the success rate is $100\%$ for all simulations. The values of the expectations show a slight difference for varying parameters. The approximation is better if the minimizer $x_*$ is centered with respect to the initial data, i.e.~$x_*=0$. 
\begin{table}[htb]\centering
\caption{Ackley function in $d=20$ with $\alpha=30$.}
\begin{tabular}{c | c || c c c }
& & & $N$ & \\
 $x_*$ & & 50 & 100 & 200 \\
 \hline
0 & \text{success rate}   & 100\% & 100\%& 100\% \\ 
& $\frac{1}{d}\mathbb{E}[\norm{v_f(T)-x_*}^2]$ & $5.21e^{-4}$ & $1.18e^{-3}$ & $2.47e^{-3}$ \\ 
\hline
1 & \text{success rate}   & 100\%& 100\%& 100\% \\ 
& $\frac{1}{d}\mathbb{E}[\norm{v_f(T)-x_*}^2]$& $5.23e^{-4}$ & $1.21e^{-3}$ & $2.55e^{-3}$ \\ 
\hline
 2 & \text{success rate} & 100\% & 100\% & 100\% \\ 
& $\frac{1}{d}\mathbb{E}[\norm{v_f(T)-x_*}^2]$ & $5.46e^{-4}$ & $1.24e^{-3}$ & $2.57e^{-3}$
\end{tabular}
\label{tab-D20-A}
\end{table}

The results are very different for the Rastrigin benchmark. The success rate is between $34\%$ and $63\%$. It is better for larger $N$ and for $x_*$ centered w.r.t.~the initial data. This difference in the behavior is anticipated by the theory, since the objective of the weighted average  is to separate local minima from the global minima. This separation works better if the distance between the local minima and the global one is large. In case of the Ackley function this distance is larger compared to the one of the Rastrigin function, which explains the different results. In Fig.~\ref{fig:evolutionDiffN} we see the evolution of the expectation $\frac{1}{d}\mathbb{E}[\norm{v_f - x_*}^2]$. For both benchmarks, the simulation with $N=200$ performs best. The smaller $N$, the more time is needed to approximate the minimizer appropriately. 
\begin{table}[htb]\centering
\caption{Rastrigin function in $d=20$ with $\alpha=30$.}
\begin{tabular}{c | c || c c c }
& & & $N$ & \\
 $x_*$ & & 50 & 100 & 200 \\
 \hline
0 & \text{success rate}   & 34.\% & 61.1\%& 62.2\% \\ 
& $\frac{1}{d}\mathbb{E}[\norm{v_f(T)-x_*}^2]$ & $3.12e^{-1}$ & $2.47e^{-1}$ & $2.42e^{-1}$ \\ 
\hline
1 & \text{success rate}   & 34.5\%& 57.1\%& 61.6\% \\ 
& $\frac{1}{d}\mathbb{E}[\norm{v_f(T)-x_*}^2]$& $3.09e^{-1}$ & $2.52e^{-1}$ & $0.244e^{-1}$ \\ 
\hline
 2 & \text{success rate} & 35.5\% & 54.8\% & 62.4\% \\ 
& $\frac{1}{d}\mathbb{E}[\norm{v_f(T)-x_*}^2]$ & $3.06e^{-1}$ & $2.51e^{-1}$ & $2.44e^{-1}$
\end{tabular}
\label{tab-D20-R}
\end{table}

\begin{figure}[h]
\centering
\begin{subfigure}{.49\textwidth}
 \includegraphics[keepaspectratio=true,width=\textwidth]{./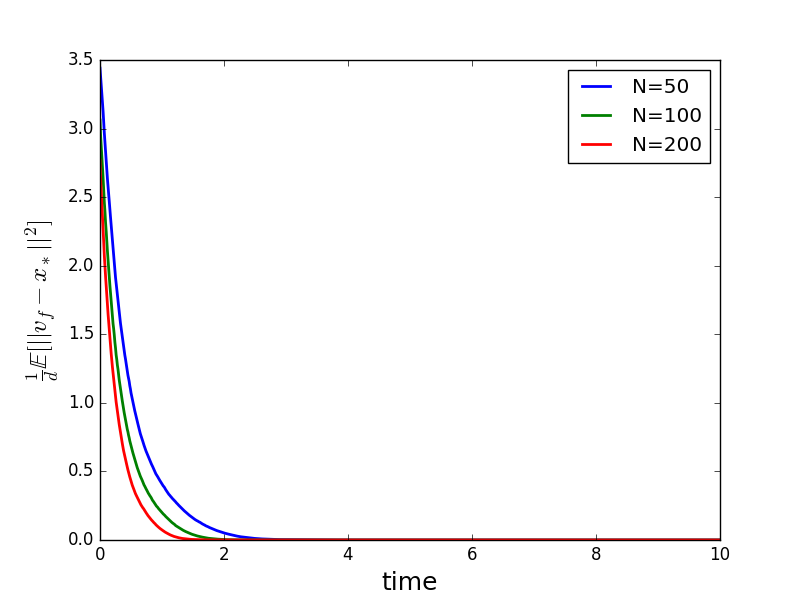}
 \caption{Ackley}
\end{subfigure}
\begin{subfigure}{.49\textwidth}
 \includegraphics[keepaspectratio=true,width=\textwidth]{./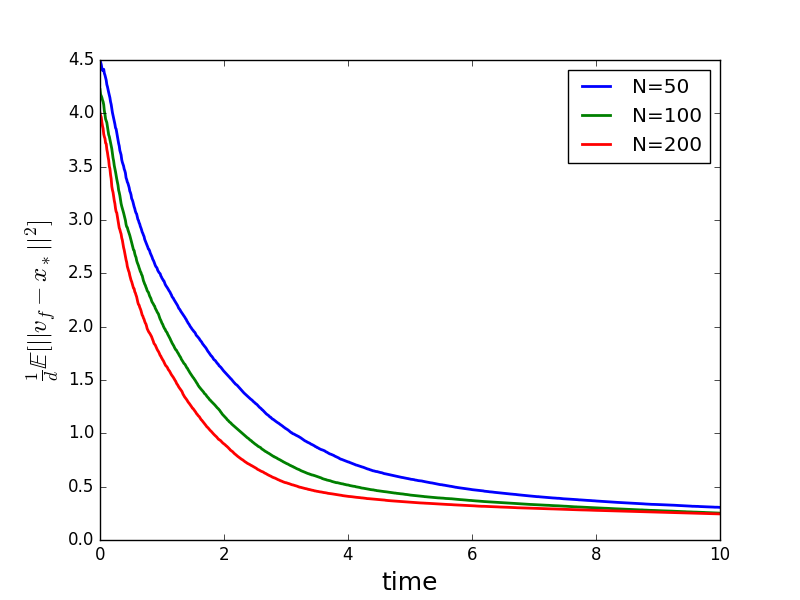}
 \caption{Rastrigin}
\end{subfigure}
\caption{Evolution of the expectation of the distances of $v_f$ and $x_*$ over time.}
\label{fig:evolutionDiffN}
\end{figure}

\subsection{Variations in exponential weight $\alpha$}

From the theory, we expect that the separation of local and global minima works better for larger $\alpha\gg 1$. Thus, we study the influence of the weight parameter in the following and check if the results improve for larger $\alpha>0$. In Table~\ref{tab-D20-alphaA} the results for the Ackley function with $N=100$ are shown. Again, the performance is very good for all tested cases. Moreover, the expectation shows that $v_f$ provides a very good approximation of the global minimizer, even for small weight parameters $\alpha$.
\begin{table}[htb]\centering
\caption{Ackley function in $d=20$ with $N=100$.}
\begin{tabular}{c | c || c c c c c }
& & & & $\alpha$ & & \\
 $x_*$ & & 10 & 20 & 30 & 40 & 50 \\
 \hline
0 & \text{success rate}   & 100\% & 100\%& 100\% & 100\% & 100\% \\ 
& $\frac{1}{d}\mathbb{E}[\norm{v_f(T)-x_*}^2]$ & $2.55e^{-4}$ & $1.06e^{-4}$ & $6.18e^{-5}$ &  $4.21e^{-5}$ & $3.04e^{-5}$ \\ 
\hline
1 & \text{success rate}   & 100\% & 100\%& 100\% & 100\% & 100\% \\ 
&  $\frac{1}{d}\mathbb{E}[\norm{v_f(T)-x_*}^2]$&  $2.58e^{-4}$  & $1.09e^{-4}$ & $6.31e^{-5}$ &  $4.24e^{-5}$  & $3.04e^{-5}$  \\ 
\hline
 2 & \text{success rate} & 100\% & 100\%& 100\% & 100\% & 100\% \\ 
&  $\frac{1}{d}\mathbb{E}[\norm{v_f(T)-x_*}^2]$ & $2.62e^{-4}$ & $1.10e^{-4}$ & $6.46e^{-5}$ & $ 4.35e^{-5}$ & $3.18e^{-5}$ \\ 
\end{tabular}
\label{tab-D20-alphaA}
\end{table}

The results obtained for the Rastrigin function are given in Table~\ref{tab-D20-alphaR}. Here we can see a significant influence of $\alpha>0$. For $\alpha=10$ the performance of the CBO algorithm is rather poor. Nevertheless, for increasing $\alpha$ the results strongly improve. And for $\alpha=50$ the results are already very good. Similarly, the expectation shows that $v_f$ provides an increasingly better approximation of the global minima for larger weight parameters $\alpha\gg 1$. This underlines our theoretical results which indicate that the performance of the CBO algorithm strongly depends on the exponential weight parameter $\alpha>0$.

\begin{table}[htb]
\caption{Rastrigin function in $d=20$ with $N=100$. }
\begin{tabular}{c | c || c c c c c }
& & & & $\alpha$ & & \\
 $x_*$ & & 10 & 20 & 30 & 40 & 50 \\
 \hline
0 & \text{success rate}   & 7.0\%& 11.6\% & 61.1\% & 93.8\% & 99.7\%\\ 
& $\frac{1}{d}\mathbb{E}[\norm{v_f(T)-x_*}^2]$ & $4.32e^{-1}$ & $3.91e^{-1}$ & $2.48e^{-1}$ &  $1.35e^{-1}$ & $7.67e^{-2}$  \\ 
\hline
1 & \text{success rate}   & 7.5\%& 10.3\%& 57.1\% & 96\% & 99.5\% \\
&  $\frac{1}{d}\mathbb{E}[\norm{v_f(T)-x_*}^2]$& $4.34e^{-1}$ & $3.94e^{-1}$ & $2.52e^{-1}$ &  $1.32e^{-1}$ & $7.59e^{-2}$  \\ 
\hline
 2 & \text{success rate} & 7.5\% & 12.2\% & 54.8\% & 94.9\% & 99.3\% \\ 
&  $\frac{1}{d}\mathbb{E}[\norm{v_f(T)-x_*}^2]$ & $4.31e^{-1}$ & $3.92e^{-1}$ & $2.51e^{-1}$ & $1.32^{-1}$ & $7.81e^{-2}$
\end{tabular}
\label{tab-D20-alphaR}
\end{table}

In Fig.~\ref{fig:evolutionNormAlpha} we see the evolution of the expectation $\frac{1}{d}\mathbb{E}[\norm{v_f - x_*}^2]$. As anticipated by the theory and the results shown in the tables, we find an improvement in performance for larger values of $\alpha$. In contrast to the graphs showing this evolution for different $N$, we find that the influence of $\alpha$ becomes more important at the end of the simulation.

\begin{figure}[h]
\centering
\begin{subfigure}{.49\textwidth}
 \includegraphics[keepaspectratio=true,width=\textwidth]{./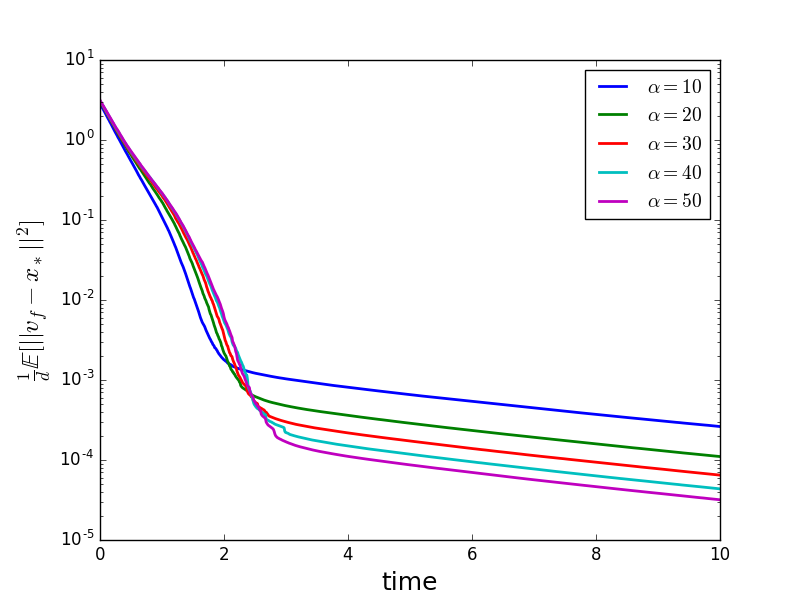}
 \caption{Ackley}
\end{subfigure}
\begin{subfigure}{.49\textwidth}
 \includegraphics[keepaspectratio=true,width=\textwidth]{./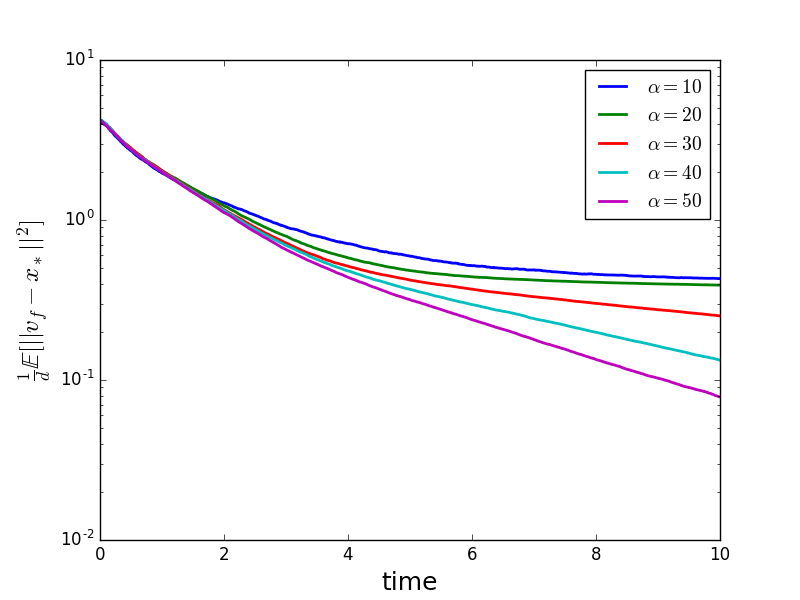}
 \caption{Rastrigin}
\end{subfigure}
\caption{Evolution of the expectation of the distances of $v_f$ and $x_*$ over time.}
\label{fig:evolutionNormAlpha}
\end{figure}

These results indicate that the weight parameter $\alpha$ has more influence on the outcome of the simulation than the number of particles $N$. Even for small $N$ we obtain good results in dimension $d=20$ if $\alpha$ is sufficiently large. Note further, that the function evaluations correlate with $N$, thus small $N$ implies few function evaluations. In many applications function evaluations are very costly and optimization algorithms may be applied only if the number is function evaluations is small. The numerical results therefore indicate that the CBO algorithm might be a good choice for the global optimization of objective functions that have large evaluation costs.

\section{Conclusion}\label{sec:conclusions}
We presented a new swarm intelligence model, namely the CBO algorithm, in the spirit of consensus formation, which can be used for global optimization purposes. As already mentioned in the introduction, most global optimization approaches have their specific advantages and drawbacks. Here we emphasize on the analytical investigation of the corresponding mean-field limit, which allowed us to study the convergence behavior and to also draw some conclusions on the microscopic system. This is an initial step in the direction of combining tools from global optimization with mean-field analysis. The results are encouraging for future research and might be applied to other approaches such as PSO, ACO or other well-known metaheuristics.

\section*{Acknowledgment}
We thank Nicola Bellomo and Jos\'e Antonio Carrillo for their constructive comments and suggestions. Oliver Tse gratefully acknowledges financial support from the Nachwuchsring of the University of Kaiserslautern.

\bibliographystyle{plain}
\bibliography{CBO}

\begin{thebibliography}{10}

\bibitem{acemoglu2011opinion}
D.~Acemoglu and A.~Ozdaglar.
\newblock Opinion dynamics and learning in social networks.
\newblock {\em Dynamic Games and Applications}, 1(1):3--49, 2011.

\bibitem{marsan2016stochastic}
G.~Ajmone~Marsan, N.~Bellomo, and L.~Gibelli.
\newblock Stochastic evolutionary differential games toward a systems theory of
  behavioral social dynamics.
\newblock {\em Mathematical Models and Methods in Applied Sciences},
  26(06):1051--1093, 2016.

\bibitem{albi2015kinetic}
G.~Albi, M.~Herty, and L.~Pareschi.
\newblock Kinetic description of optimal control problems and applications to
  opinion consensus.
\newblock {\em Communications in Mathematical Sciences}, 13(6):1407--1429,
  2015.

\bibitem{aletti2007first}
G.~Aletti, G.~Naldi, and G.~Toscani.
\newblock First-order continuous models of opinion formation.
\newblock {\em SIAM Journal on Applied Mathematics}, 67(3):837--853, 2007.

\bibitem{Sichani13}
O.~Askari-Sichani and M.~Jalili.
\newblock Large-scale global optimization through consensus of opinions over
  complex networks.
\newblock {\em Complex Adaptive Systems Modeling}, 1(1):11, 2013.

\bibitem{bellomo2016mathematics}
N.~Bellomo and F.~Brezzi.
\newblock Mathematics, complexity and multiscale features of large systems of
  self-propelled particles.
\newblock {\em Mathematical Models and Methods in Applied Sciences},
  26(02):207--214, 2016.

\bibitem{bellomo2011modeling}
N.~Bellomo and C.~Dogbe.
\newblock On the modeling of traffic and crowds: A survey of models,
  speculations, and perspectives.
\newblock {\em SIAM review}, 53(3):409--463, 2011.

\bibitem{bellomo2016behavioral}
N.~Bellomo and L.~Gibelli.
\newblock Behavioral crowds: Modeling and monte carlo simulations toward
  validation.
\newblock {\em Computers \& Fluids}, 141:13--21, 2016.

\bibitem{bellomo2012modeling}
N.~Bellomo, B.~Piccoli, and A.~Tosin.
\newblock Modeling crowd dynamics from a complex system viewpoint.
\newblock {\em Mathematical models and methods in applied sciences},
  22(supp02):1230004, 2012.

\bibitem{CarrilloCanizoBolley}
F.~Bolley, J.~A. Ca\~{n}izo, and J.~A. Carrillo.
\newblock Stochastic mean-field limit: Non-lipschitz forces and swarming.
\newblock {\em Mathematical Models and Methods in Applied Sciences},
  21(11):2179--2210, 2011.

\bibitem{boudin2009kinetic}
L.~Boudin and F.~Salvarani.
\newblock A kinetic approach to the study of opinion formation.
\newblock {\em ESAIM: Mathematical Modelling and Numerical Analysis},
  43(03):507--522, 2009.

\bibitem{burini2016collective}
D.~Burini, S.~De~Lillo, and L.~Gibelli.
\newblock Collective learning modeling based on the kinetic theory of active
  particles.
\newblock {\em Physics of life reviews}, 16:123--139, 2016.

\bibitem{CarrilloChoiTotzeckTse}
J.~A. Carrillo, Y.-P. Choi, C.~Totzeck, and O.~Tse.
\newblock An analytical framework for a consensus-based global optimization
  method.
\newblock {\em arXiv:1602.00220}, 2016.

\bibitem{CarrilloCS}
J.~A. Carrillo, M.~Fornasier, J.~Rosado, and G.~Toscani.
\newblock Asymptotic flocking dynamics for the kinetic cucker–smale model.
\newblock {\em SIAM Journal on Mathematical Analysis}, 42(1):218--236, 2010.

\bibitem{SimAnnCerny}
V.~Cerny.
\newblock A thermodynamical approach to the travelling salesman problem: An
  efficient simulation algorithm.
\newblock {\em J. Optim. Theory Appl.}, 45:41--51, 1985.

\bibitem{chatterjee1977towards}
S.~Chatterjee and E.~Seneta.
\newblock Towards consensus: some convergence theorems on repeated averaging.
\newblock {\em Journal of Applied Probability}, 14:89--97, 1977.

\bibitem{cockburn2003discontinuous}
B.~Cockburn.
\newblock Discontinuous galerkin methods.
\newblock {\em ZAMM-Journal of Applied Mathematics and Mechanics/Zeitschrift
  f{\"u}r Angewandte Mathematik und Mechanik}, 83(11):731--754, 2003.

\bibitem{cucker1}
F.~Cucker and S.~Smale.
\newblock Emergent behavior in flocks.
\newblock {\em Automatic Control, IEEE Transactions on}, 52(5):852--862, May
  2007.

\bibitem{cucker2}
F.~Cucker and S.~Smale.
\newblock On the mathematics of emergence.
\newblock {\em Japanese Journal of Mathematics}, 2(1):197--227, 2007.

\bibitem{degroot1974reaching}
M.~H. DeGroot.
\newblock Reaching a consensus.
\newblock {\em Journal of the American Statistical Association},
  69(345):118--121, 1974.

\bibitem{dembo2009large}
A.~Dembo and O.~Zeitouni.
\newblock {\em Large deviations techniques and applications}, volume~38.
\newblock Springer Science \& Business Media, 2009.

\bibitem{dobrushin1989dynamical}
R.~L. Dobrushin, Ya.~G. Sinai, and Yu.~M. Sukhov.
\newblock Dynamical systems of statistical mechanics.
\newblock In {\em Dynamical Systems II}, pages 208--254. Springer, 1989.

\bibitem{dolfin2015modeling}
M.~Dolfin and M.~Lachowicz.
\newblock Modeling opinion dynamics: how the network enhances consensus.
\newblock {\em Netw Heterog Media}, 10(4):421--441, 2015.

\bibitem{eberhart1995new}
R.~C. Eberhart and J.~Kennedy.
\newblock A new optimizer using particle swarm theory.
\newblock In {\em Proceedings of the sixth international symposium on micro
  machine and human science}, volume~1, pages 39--43. New York, NY, Institute
  of Electrical and Electronics Engineers; Piscataway, NJ: IEEE Service Center,
  1995.

\bibitem{festinger1962theory}
L.~Festinger.
\newblock {\em A theory of cognitive dissonance}, volume~2.
\newblock Stanford university press, 1962.

\bibitem{french1956formal}
J.~R.~P. French~Jr.
\newblock A formal theory of social power.
\newblock {\em Psychological review}, 63(3):181, 1956.

\bibitem{galam2012sociophysics}
S.~Galam.
\newblock {\em Sociophysics: a physicist's modeling of psycho-political
  phenomena}.
\newblock Springer Science \& Business Media, 2012.

\bibitem{galam1991towards}
S.~Galam and S.~Moscovici.
\newblock Towards a theory of collective phenomena: consensus and attitude
  changes in groups.
\newblock {\em European Journal of Social Psychology}, 21(1):49--74, 1991.

\bibitem{HaTadmor}
S.-Y. Ha and E.~Tadmor.
\newblock From particle to kinetic and hydrodynamic descriptions of flocking.
\newblock {\em Kinetic and Related Models}, 1(3):415--435, 2008.

\bibitem{hegselmann2002opinion}
R.~Hegselmann and U.~Krause.
\newblock Opinion dynamics and bounded confidence models, analysis, and
  simulation.
\newblock {\em Journal of Artificial Societies and Social Simulation},
  5(3):1--33, 2002.

\bibitem{helbing2010quantitative}
D.~Helbing.
\newblock {\em Quantitative sociodynamics: stochastic methods and models of
  social interaction processes}.
\newblock Springer Science \& Business Media, 2010.

\bibitem{Higham}
D.~J. Higham.
\newblock An algorithmic introduction to numerical simulation of stochastic
  differential equations.
\newblock {\em SIAM Review}, 43(3):525--546, 2001.

\bibitem{jamil2013literature}
M.~Jamil and X.-S. Yang.
\newblock A literature survey of benchmark functions for global optimisation
  problems.
\newblock {\em International Journal of Mathematical Modelling and Numerical
  Optimisation}, 4(2):150--194, 2013.

\bibitem{jia2015opinion}
P.~Jia, A.~MirTabatabaei, N.~E. Friedkin, and F.~Bullo.
\newblock Opinion dynamics and the evolution of social power in influence
  networks.
\newblock {\em SIAM Review}, 57(3):367--397, 2015.

\bibitem{karaboga2014comprehensive}
D.~Karaboga, B.~Gorkemli, C.~Ozturk, and N.~Karaboga.
\newblock A comprehensive survey: artificial bee colony (abc) algorithm and
  applications.
\newblock {\em Artificial Intelligence Review}, 42(1):21--57, 2014.

\bibitem{kennedy2010particle}
J.~Kennedy.
\newblock Particle swarm optimization.
\newblock In {\em Encyclopedia of Machine Learning}, pages 760--766. Springer,
  2010.

\bibitem{SimAnnKirk}
S.~Kirkpatrick, C.~D. Gelatt~Jr, and M.~P. Vecchi.
\newblock Optimization by simulated annealing.
\newblock {\em Science}, 220:671--680, 1983.

\bibitem{knopoff2014mathematical}
D.~Knopoff.
\newblock On a mathematical theory of complex systems on networks with
  application to opinion formation.
\newblock {\em Mathematical Models and Methods in Applied Sciences},
  24(02):405--426, 2014.

\bibitem{krause2000discrete}
U.~Krause.
\newblock A discrete nonlinear and non-autonomous model of consensus formation.
\newblock {\em Communications in difference equations}, pages 227--236, 2000.

\bibitem{kulkarni2007discontinuous}
D.~V. Kulkarni, D.~V. Rovas, and D.~A. Tortorelli.
\newblock A discontinuous galerkin formulation for solution of parabolic
  equations on nonconforming meshes.
\newblock In {\em Domain Decomposition Methods in Science and Engineering XVI},
  pages 651--658. Springer, 2007.

\bibitem{LoSch13}
L.~Marco and S.~Fabio.
\newblock {\em {Global optimization. Theory, algorithms, and applications.}}
\newblock Philadelphia, PA: Society for Industrial and Applied Mathematics
  (SIAM), 2013.

\bibitem{MersenneTwister}
M.~Matsumoto and T.~Nishimura.
\newblock Mersenne twister: A 623-dimensionally equidistributed uniform
  pseudo-random number generator.
\newblock {\em ACM Transactions on Modeling and Computer Simulation},
  8(1):3--30, 1998.

\bibitem{naldi2010mathematical}
G.~Naldi, L.~Pareschi, and G.~Toscani.
\newblock {\em Mathematical modeling of collective behavior in socio-economic
  and life sciences}.
\newblock Springer Science \& Business Media, 2010.

\bibitem{osgood1955principle}
C.~E. Osgood and P.~H. Tannenbaum.
\newblock The principle of congruity in the prediction of attitude change.
\newblock {\em Psychological review}, 62(1):42, 1955.

\bibitem{pareschi2013interacting}
L.~Pareschi and G.~Toscani.
\newblock {\em Interacting multiagent systems: kinetic equations and Monte
  Carlo methods}.
\newblock OUP Oxford, 2013.

\bibitem{parsopoulos2002recent}
K.~E. Parsopoulos and M.~N. Vrahatis.
\newblock Recent approaches to global optimization problems through particle
  swarm optimization.
\newblock {\em Natural computing}, 1(2-3):235--306, 2002.

\bibitem{poli2007particle}
R.~Poli, J.~Kennedy, and T.~Blackwell.
\newblock Particle swarm optimization.
\newblock {\em Swarm intelligence}, 1(1):33--57, 2007.

\bibitem{motschtadmor}
M.~Sebastien and T.~Eitan.
\newblock Heterophilious dynamics enhances consensus.
\newblock {\em SIAM Review}, 56(4):577--621, 2014.

\bibitem{Strang}
G.~Strang.
\newblock On the construction and comparison of difference schemes.
\newblock {\em SIAM J. Numer. Anal.}, 5(3):506--517, 1968.

\bibitem{Sznitman}
A.-S. Sznitman.
\newblock Topics in propagation of chaos.
\newblock In Paul-Louis Hennequin, editor, {\em Ecole d'Eté de Probabilités
  de Saint-Flour XIX — 1989}, volume 1464 of {\em Lecture Notes in
  Mathematics}, pages 165--251. Springer Berlin Heidelberg, 1991.

\bibitem{vicsek1995novel}
T.~Vicsek, A.~Czir{\'o}k, E.~Ben-Jacob, I.~Cohen, and O.~Shochet.
\newblock Novel type of phase transition in a system of self-driven particles.
\newblock {\em Physical review letters}, 75(6):1226, 1995.

\bibitem{Villani}
C.~Villani.
\newblock {\em Topics in Optimal Transportation}.
\newblock American Mathematical Soc., 2003.

\end{thebibliography}

\end{document}